\newtheorem{thm}{Theorem}[section]
\newtheorem{lem}[thm]{Lemma}
\newtheorem{prop}[thm]{Proposition}
\def\enne{\mathbb{N}}
\def\zeta{\mathbb{Z}}
\def\erre{\mathbb{R}}
\def\Rz{\mathbb{R}}
\def\P{\mathbb{P}}
\def\E{\mathop{{}\mathbb{E}}}
\def\cL{\mathscr{L}}
\def\cF{\mathscr{F}}
\def\cB{\mathscr{B}}
\def\eps{\varepsilon}
\def\cP{\mathscr{P}}
\def\OO{\mathcal{O}}
\def\U{\mathcal{U}}
\def\beq{\begin{equation}}
\def\eeq{\end{equation}}
\def\to{\rightarrow}
\def\wto{\rightharpoonup}
\def\wstarto{\stackrel{*}{\rightharpoonup}}
\def\embed{\hookrightarrow}
\def\cembed{\stackrel{c}{\hookrightarrow}}
\def\norm #1{\left\|#1\right\|}
\def\sp #1#2{\left<#1,#2\right>}
\newcommand\ip\sp
\renewcommand{\d}{{\mathrm d}}
\begin{document}
\title[Stochastic PDEs via convex minimization]
{Stochastic PDEs via convex minimization}

\author{Luca Scarpa}
\address[Luca Scarpa]{Faculty of Mathematics, University of Vienna, 
Oskar-Morgenstern-Platz 1, 1090 Wien, Austria.}
\email{luca.scarpa@univie.ac.at}
\urladdr{http://www.mat.univie.ac.at/$\sim$scarpa}

\author{Ulisse Stefanelli}
\address[Ulisse Stefanelli]{Faculty of Mathematics, University of Vienna, 
Oskar-Morgenstern-Platz 1, 1090 Wien, Austria, Vienna Research Platform on Accelerating
  Photoreaction Discovery, University of Vienna, W\"ahringerstra\ss e 17, 1090 Wien, Austria, and  Istituto di Matematica
Applicata e Tecnologie Informatiche \textit{{E. Magenes}}, v. Ferrata 1, 27100
Pavia, Italy.}
\email{ulisse.stefanelli@univie.ac.at}
\urladdr{http://www.mat.univie.ac.at/$\sim$stefanelli}

\subjclass[2010]{35K55, 
35R60, 
49J27}

\keywords{Stochastic partial differential equations, variational
  method, Weighted Energy-Dissipation principle, elliptic regularization.}   

\begin{abstract}
We prove the applicability of the Weighted Energy-Dissipation
(WED)
variational principle \cite{ms3} to
nonlinear parabolic stochastic partial
differential equations in abstract form. The WED principle consists in the
minimization of a parameter-dependent convex 
functional on entire trajectories. Its unique minimizers correspond to
elliptic-in-time regularizations of the stochastic differential
problem. As the regularization parameter tends to zero, solutions of
the limiting problem are recovered.   This in particular provides a direct approch via
convex optimization to the approximation of nonlinear stochastic partial
differential equations.
\end{abstract}

\maketitle


\section{Introduction}
\setcounter{equation}{0}
\label{sec:intro}

This paper is concerned with stochastic quasilinear partial differential equations of the form 
\begin{equation}
\label{eq:00} 
\d u - {\rm div}\,( D\phi(t,\nabla u)) \,\d t  
 + D \psi (t,u)\, \d t\ni f(t) \, \d t+
  B(t) \, \d W\,,
\end{equation}
complemented with suitable boundary and initial conditions.
Here, the real-valued function $u$ is defined on $\Omega \times
[0,T] \times {\mathcal O}$, where $(\Omega,\cF, \P)$ is a probability space,  
$ {\mathcal O} \subset \Rz^d$ is a smooth bounded domain, 
and $T>0$ is a reference time. The functions
$\phi(t,\cdot): \Rz^d \to \Rz$ and $\psi(t,\cdot): \Rz \to \Rz$ are
asked to be convex, the
gradients $D\phi$ and $D\psi$ are taken with respect to the second
variable only,
and the time-dependent sources $f$ and $B$ are
given. In particular, $B(\cdot)\in \cL^2(U;L^2(\mathcal O))$
(Hilbert-Schmidt operators) is 
stochastically integrable with respect to $W$, a
cylindrical Wiener process on a separable Hilbert space $U$. 

Under different choices for the nonlinearities $\phi$ and $\psi$,
equation \eqref{eq:00} may arise in connection with various classical
models, including the Allen-Cahn and the $p$-Laplace equation. 
Assume equation \eqref{eq:00} to be complemented with homogeneous
Dirichlet boundary conditions, for notational simplicity, and with the
initial condition $u(0)=u_0$, where $u_0$ is some suitable
initial datum. 
Letting $\phi(t,\cdot)$ and $\psi(t,\cdot)$ be of $p$-growth, equation \eqref{eq:00} can be
weakly formulated in the dual of the space $W^{1,p}_0(\mathcal O)$,
according to the classical theory by Pardoux~\cite{Pard} and 
Krylov--Rozovski\u\i~\cite{KR-SPDE}. It is well-known  that the
solution $u$ is an It\^o process, in the sense that it can be represented in the general form 
\begin{equation}\label{con}
u = u^d + \int_0^\cdot u^s(r)\,\d W(r)\,,
\end{equation}
where the process $u^d$ is differentiable in time and $u^s$
is $\cL^2(U;L^2(\mathcal O))$-valued and 
stochastically integrable with respect to $W$.
This decomposition into the deterministic part $u^d$ and the
stochastic part $u^s$ is unique. With this notation, 
$u$ is a solution to
the original problem \eqref{eq:00} 
if and only if $u$ satisfies the constraint \eqref{con} and the equations
\begin{align*}
  \partial_t u^d - {\rm div}\,( D\phi(\cdot,\nabla u))
   + D \psi (\cdot,u) \ni f\,, \qquad u^s=B\,, \qquad u^d(0)=u_0\,.
\end{align*}

The aim of this paper is to tackle the weak solvability of equation
\eqref{eq:00} via the Weighted Energy-Dissipation
(WED) variational approach. This hinges upon the minimization of the
parameter-dependent functional $I_\eps$ on entire trajectories, the
so-called WED functional, 
given~by
\begin{align*}
  I_\eps(u) &=
  \E\displaystyle\int_0^T\!\!\int_{\mathcal O} e^{{-r}/{\eps}}
  \left(\frac\eps2|\partial_t u^d(r)|^2 + \phi(r,\nabla u(r)) +
    \psi(r,u(r)) - f(r)\, u(r)
 \right)\,\d x\,\d r\\
 &\quad+ \E\displaystyle\int_0^T 
 e^{{-r}/{\eps}}\frac12\norm{u^s(r)-B(r)}^2_{\cL^2(U,L^2(\mathcal
   O))}\, \d r\,.
\end{align*}
The convex WED functional $I_\eps$ has to be minimized under two linear
constraints, namely the decomposition \eqref{con} and the initial
condition $u(0)=u_0$. This results in a {\it
  convex minimization} problem. Our main result, Theorem \ref{thm:00},
states that that, under
suitable assumptions on data, 
\begin{center}
  \begin{minipage}{0.82\linewidth}
    for all $\eps>0$ the minimizer $u_\eps$ of $I_\eps$ uniquely
    exists. As $\eps \to 0$ we have that $u_\eps\to u$ where $u$ is the unique solution of 
    the stochastic differential problem~\eqref{eq:00}.
  \end{minipage}
\end{center}
This provides a new variational approximation to
the stochastic differential problem \eqref{eq:00}, making it
accessible to a direct optimization approach, and paving the way to the
application of the far-reaching tools of
the calculus of variations \cite{Buttazzo,Dacorogna,DalMaso}.

The role of the exponential weight in $I_\eps$ is revealed by
computing the corresponding Euler-Lagrange equation. In the
current setting these formally read  
\begin{align*}
   &-\eps\partial_t(\partial_t u^d_\eps)^d + \partial_t u^d_\eps -
   {\rm div}\,( D\phi(\cdot,\nabla u_\eps)) + D\psi (\cdot,u_\eps)=f\,,
   \qquad 
    u_\eps^s=B + \eps(\partial_t u^d_\eps)^s ,\\
  &\eps\partial_tu^d_\eps(T)=0\,, \qquad u_\eps^d(0)=u_{0}\,,
\end{align*}
where we have also included the initial condition, for completeness.
In particular, the minimizers $u_\eps$ solve an
\emph{elliptic-in-time regularization} of the stochastic differential
problem \eqref{eq:00}, complemented by an extra Neumann boundary
condition at $T$. Note that for all $\eps>0$ the problem is {\it
not causal} and that causality is restored in the limit $\eps \to 0$.
 
Elliptic-regularization techniques for nonlinear PDEs are quite
classical. Introduced by  Lions in \cite{Lions:63}, they have
been used  
by Kohn \& Nirenberg \cite{Kohn65},  Olein\u{i}k
\cite{Oleinik}, and again Lions \cite{Lions:63a,Lions:65} in order to
investigate regularity. An account on linear results can be found the 
 the book by {Lions \& Magenes} 
 \cite{Lions-Magenes1}, whereas an early result on solvability in a
 nonlinear setting is due to Barbu \cite{Barbu75}.

The variational formulation of elliptic-regularization via WED
functionals can be traced back to Ilmanen \cite{Ilmanen}, who used it in the context of Brakke mean-curvature
flow of varifolds, and to Hirano \cite{Hirano94} in connection with periodic
solutions of gradient flows. A reference to 
 WED functionals is already 
pointed out in the classical textbook by  Evans
\cite[Problem 3, p.~487]{Evans98}.

The WED variational approach has been applied to a variety of
different parabolic problems, including gradient flows
\cite{akno,BDM15,CO08,ms3},
rate-independent flows \cite{MiOr2008,ms2}, crack propagation
\cite{Larsen-et-al09}, doubly-nonlinear flows
\cite{akmest,AKSt2010,AkSt2011,AkSt3,Me2}, nonpotential perturbations
\cite{akme,Me} and variational approximations \cite{LiMe}, curves of maximal slope
in metric spaces \cite{RoSaSeSt,RoSaSeSt2,segatti}, mean curvature flow
\cite{Ilmanen,SpSt}, dynamic plasticity \cite{Davoli}, and the incompressible
Navier-Stokes system \cite{OSS}. 
 
Motivated by a conjecture by De Giorgi
\cite{Degiorgi96}, the WED variational approach has been extended to
semilinear wave equations \cite{SeTi,St2011}. Extensions to other
classes of hyperbolic problems including mixed
hyperbolic-parabolic equations
\cite{LiSt,LiSt2,Serra-Tilli14,tenta1} and nonhomogeneous equations
\cite{tenta2,tenta3} have also been addressed.

In the context of stochastic PDEs, the application of tools 
from calculus of variations in order to characterize variational solutions 
is much less developed, and
has been employed so far mainly in connection with the Brezis-Ekeland principle. 
In this direction, we mention the pioneering works by Barbu \& R\"ockner
\cite{barbu-opt, barbu-roc} dealing with SPDEs with additive 
and linear multiplicative noise, and by Krylov \cite{kry-ext}.
More recently, Boroushaki \& Ghoussoub \cite{bor-ghou}
generalized these results also to the case of multiplicative noise,
by characterizing solutions as minima of self-dual functionals.

This paper contributes to the first application of the WED principle in the stochastic
setting. Compared with the deterministic situation, the theory is here
much more involved. 

The first main difficulty arises in proving existence of minimizers
for $I_\eps$. This requires the characterization of the
subdifferential of $I_\eps$ in terms of the Euler-Lagrange problem.
In the stochastic setting, this $\eps$-regularized problem 
consist of a forward-backward system of SPDEs.
The identification of the Euler-Lagrange equation is more
delicate
compared to the deterministic framework.
In the deterministic case, it is well known that the 
space of compactly-supported $C^k$ test-functions $C^k_c(0,T)$
is dense in $L^2(0,T)$ for all $k\in\enne$: this allows to identify the Euler-Lagrange 
equation pretty straightforwardly at least in a weak sense.
 By contrast,
due to the presence of 
nonzero martingales in $L^2(\Omega; L^2(0,T))$,
the usual deterministic techniques do not apply here,
 and the Euler-Lagrange equation has to be characterized 
using different tools, both on the analytical side and
the probabilistic side. As a matter of fact, on the one hand we 
need to introduce suitable functional spaces of processes in Banach spaces
(It\^o processes), and on the other hand we rely on 
the abstract variational theory for backward SPDEs and 
martingale representation theorems
in infinite dimensional spaces. 

The second main difficulty concerns 
proving the well-posedness of the
Euler-Lagrange problem. As we have pointed out above, 
the second-order Euler-Lagrange equation is noncausal and 
corresponds to a system of a forward and
a backward first-order stochastic equation. 
The discussion of this forward-backward system calls for a further approximation
on the nonlinearity. Identifications of nonlinear limits are performed via lower
semicontinuity arguments, which in turn rely on
specific It\^o's formulas, both at the approximate and at the limit level. 

In the paper, we actually consider a general class of abstract
equations, including \eqref{eq:00}. Indeed, 
we frame the problem in the abstract variational setting
of a 
Gelfand triple $(V,H,V^*)$ and focus on 
\[
  \d u + A(t,u)\,\d t \ni B\,\d W\,, \qquad u(0)=u_0\,,
\]
where $A$ is a time-dependent subdifferential-type operator from $V$ to $V^*$,
$V$ being a separable reflexive Banach space and $H$ a separable Hilbert space.
We collect all relevant notation, list assumptions, and state Theorem
\ref{thm:00}, our main
result, in Section \ref{sec:main}. The proof of
Theorem \ref{thm:00} is then split into Section \ref{sec:thm1}
(Euler-Lagrange problem), Section \ref{sec:thm2} (convergence as
$\eps \to 0$), and Section \ref{sec:equiv} (existence of minimizers).


\section{Main result}
\label{sec:main}
 
In the following, we directly focus on the abstract Cauchy problem 
\beq\label{eq:0}
  \d u + \partial \Phi(t,u)\,\d t \ni B\,\d W\,, \qquad u(0)=u_0\,.
\eeq
The latter arises as variational
formulation of an initial and boundary value problem for equation
\eqref{eq:00} by choosing the convex map $\Phi(t,\cdot)$ as
$$\Phi_p(t,\cdot): W^{1,p}_0(\mathcal O) \to (-\infty,\infty], \quad \Phi_p(t,u):=
  \int_{\mathcal O}\left(\phi(t,\nabla u) + \psi(t,u)\right) \, \d x\,.
$$
Note that we have neglected the deterministic forcing $f$ in
\eqref{eq:00} for the sake of
notational simplicity. Indeed, this could be included in the analysis
with no specific difficulty. 

In this section we introduce the necessary notation and assumptions to make the
meaning of problem \eqref{eq:0} precise and we state of our main result, Theorem \ref{thm:00}. This
is then proved in Sections~\ref{sec:thm1}-\ref{sec:equiv}.

Let $(\Omega,\cF,\P)$ be a probability space endowed with a
complete and right-continuous filtration 
$(\cF_t)_{t\in[0,T]}$, where $T>0$ is a fixed final time. Let also 
$W$ be a cylindrical Wiener process on a separable Hilbert space $U$.
We will assume that $(\cF_t)_{t\in[0,T]}$ is the natural 
augmented filtration associated to $W$.
The progressive $\sigma$-algebra on $\Omega\times[0,T]$ will be 
denoted by $\cP$. For any Banach space $E$, 
the norm in $E$ will be denoted by $\norm{\cdot}_E$.
 For any $r,s\in\mathopen[1,+\infty\mathclose)$ we denote by 
$L^r_\cP(\Omega; L^s(0,T; E))$ the usual space of 
Bochner-integrable functions which are strongly $\cP$-measurable 
from $\Omega\times[0,T]$ to $E$.
When $r>1$ and $s=+\infty$, we explicitly define
\[
  L^r_\cP(\Omega; L^\infty(0,T; E^*)):=
  \left\{v:\Omega\to L^\infty(0,T; E^*) \text{ weakly* meas.}\,:\,
  \E\norm{v}_{L^\infty(0,T; E^*)}^r<\infty
  \right\}
\]
where for any $f\in L^1(\Omega)$ we use the standard
notation $\E f : = \int_\Omega f \,\d \P$ for the expected value. 
Recall that 
by \cite[Thm.~8.20.3]{edwards} we have the identification
\[
L^r_\cP(\Omega; L^\infty(0,T; E^*))=
\left(L^{r/(r-1)}_\cP(\Omega; L^1(0,T; E))\right)^*\,.
\]
Moreover, for any $r\geq1$, the
symbol $L^r(\Omega; C^0([0,T]; E))$ denotes the space
of $r$-integrable continuous adapted process
(hence also progressively measurable) with values in $E$.
For any pair of separable Hilbert spaces $E_1$ and $E_2$, 
we will use the symbols $\cL(E_1,E_2)$ and $\cL^2(E_1,E_2)$
for the spaces of linear continuous and 
Hilbert-Schmidt operators from $E_1$ and $E_2$, respectively.

Let us fix now a useful notation 
in order to denote suitable spaces of It\^o processes.
For every separable reflexive Banach space $E_1$ and any
Hilbert spaces $E$, $E_2$,
with $E_1,E_2\embed E$ continuously,
and for any $s,r\in[1,+\infty)$, we use the notation
\[
  \mathcal I^{s,r}(E_1,E_2):=
  L^s_\cP(\Omega; W^{1,s}(0,T; E_1))\oplus
  \left[L^r_{\cP}(\Omega; L^2(0,T; \cL^2(U, E_2)))\cdot W\right]\,,
\]
where we have used the classical symbol $\cdot W$ to denote
stochastic integration with respect to $W$.
Equivalently, we have the representation 
\begin{align*}
  \mathcal I^{s,r}(E_1,E_2)&=\left\{z=z^d + z^s\cdot W:\right.\\
  &\qquad\left.z^d\in L^s_\cP(\Omega; W^{1,s}(0,T; E_1))\,,\quad
  z^s\in L^r_{\cP}(\Omega; L^2(0,T; \cL^2(U, E_2))) \right\}\,.
\end{align*}
The latter specifies that the two components $z^d$ and $z^s$ are uniquely 
determined from the process $z$, see \eqref{con},
 so that the sum appearing above is actually 
a direct sum, and the projections 
\begin{align*}
  &\Pi^d:\mathcal I^{s,r}(E_1,E_2)\to L^s_\cP(\Omega; W^{1,s}(0,T; E_1))\,, \quad z\mapsto z^d\,,\\
  &\Pi^s:\mathcal I^{s,r}(E_1,E_2)\to L^r_\cP(\Omega; L^2(0,T; \cL^2(U,E_2)))\,, \quad z\mapsto z^s\,,
\end{align*}
are well-defined, linear, and continuous. 
Let us also point out that the space $\mathcal I^{s,r}(E_1,E_2)$ is a 
Banach space,
and even a Hilbert space if $s=r=2$ and $E_1$ is a Hilbert space.
A natural norm on $\mathcal I^{s,r}(E_1,E_2)$ is given by 
\[
  \norm{z}_{\mathcal I^{s,r}(E_1,E_2)}:=
  \norm{z^d}_{L^s_\cP(\Omega; W^{1,s}(0,T; E_1))}+
  \norm{z^s}_{L^r_{\cP}(\Omega; L^2(0,T; \cL^2(U, E_2)))}\,, \qquad z\in\mathcal I^{s,r}(E_1,E_2)\,.
\]

Throughout the paper, we assume the following setting. 
\begin{description}
  \item[H0]  $H$ and $V_0$ are separable Hilbert spaces and 
$V$ is a separable reflexive Banach space,
with $V_0 \embed  V\embed H$ continuously and densely. 
\end{description}
In the sequel, we will
identify $H$ with its dual $H^*$ in the canonical way, so that 
we have the continuous and dense inclusions 
\[
V_0\embed V\embed H\embed V^*\embed V_0^*\,.
\]
The scalar product in $H$ and the duality pairing 
between $V^*$ and $V$
(and between $V_0^*$ and $V_0$)
will be denoted by the symbols $(\cdot, \cdot)$ and $\ip{\cdot}{\cdot}$, respectively.

We assume the following hypotheses.
\begin{description}
  \item[H1] $\Phi:\Omega\times[0,T]\times V\to [0,+\infty)$ is $\cP\otimes\cB(V)$-measurable,
  and $\Phi(\omega,t,\cdot):V\to[0,+\infty)$ is convex and 
  lower semicontinuous. We let 
  $A(\omega,t,\cdot):=\partial \Phi(\omega,t,\cdot):V\to 2^V$ for almost every
  $(\omega,t)\in\Omega\times[0,T]$.
  Moreover, we ask for
  constants $c_A, C_A>0$ and $p\in\mathopen[2,+\infty\mathclose)$, and a 
  $\cP$-measurable process $f_A\in L^1(\Omega\times(0,T))$
  such that, setting $q:=p/(p-1)$,
  \[
    \ip{v}{z}\geq c_a\norm{z}_V^p\,, \qquad\qquad
    \norm{v}_{V^*}^q\leq f_A(\omega,t) + C_A\norm{z}_V^p\,,
  \]
  for almost every $(\omega,t)\in\Omega\times[0,T]$, for every 
  $z\in V$, and for every $v\in A(\omega,t,z)$.
  \item[H2] $u_0\in L^2(\Omega, \cF_0; H)$ and $B\in L^2_{\cP}(\Omega; L^2(0,T; \cL^2(U,H)))$.
\end{description}
Let us point out that the progressive measurability of $\Phi$
required in {\bf H1}
implies that the operator
$A$ is $\cP\otimes\cB(V)/\cB(V^*)$-Effros-measurable,
in the sense of
\cite{Hess:meas,Rock:int}.

 Before moving on, let us comment on the choice of the space $V_0$.
  The introduction of $V_0$ will be needed in the paper
  since at some point we would have to rely on It\^o's formula for the square 
  of the $V^*$-norm. However, this cannot be done in general 
  if $V$ is a Banach space: indeed, in such case
  the duality mapping of $V^*$ is nonlinear and possibly 
  not twice Fr\'echet-differentiable, hence the required It\^o formula 
  is not trivial and not known in general, even in the extended 
  framework of stochastic integration in UMD Banach spaces
  (see \cite{bra-vanneer, van-neer, van-neer2}). 
  The introduction of the space $V_0$ is then employed to 
  bypass this problem by exploiting
  its structure as Hilbert space,
  and allows to write an It\^o formula in $V_0^*$.
  Clearly, if $V$ is a Hilbert space itself, the optimal choice of $V_0$ is given by 
  $V_0=V$. In general, if $V$ is only a Banach space, roughly speaking
  one should ideally choose the space $V_0$ {\it as large as possble}.
  For example, if $V=W^{s,\ell}(\OO)$ for a certain domain $\OO\subset\erre^d$
  with Lipschitz boundary, with $\ell\in(2,+\infty)$ and $s>0$, one could choose
  \[
  V_0=H^{s'}(\OO)\,, \qquad \forall\,s' \geq s+\frac{d}2-\frac{d}{\ell}\,,
  \]
  with the choice $s' = s+{d}/2-{d}/{\ell}$ being optimal in this sense.

The classical variational theory on SPDEs
(see \cite{Pard, KR-SPDE}) ensures that under the assumptions
{\bf H0}--{\bf H2} the Cauchy problem \eqref{eq:0} admits 
a unique solution $(u,\xi)$, with 
\beq\label{1}
  u\in L^2(\Omega; C^0([0,T]; H))\cap L^p_\cP(\Omega; L^p(0,T; V))\qquad
  \xi\in L^q_\cP(\Omega; L^q(0,T; V^*))\,,
\eeq
such that 
\beq\label{2}
  \xi(\omega,t) \in A(\omega,t,u(\omega,t)) \quad\text{for a.e.~}(\omega,t)\in\Omega\times(0,T)\,,
\eeq
and
\beq\label{3}
  u(t) + \int_0^t\xi(s)\,\d s = u_0 + \int_0^tB(s)\,\d W(s) \quad\text{in } V^*\,, \quad
  \forall\,t\in[0,T]\,,\quad\P\text{-a.s.}
\eeq

Let us reformulate this solution concept in a different fashion.
We introduce the space
\begin{align*}
  \U&:=\left\{z\in
  L^2(\Omega; C^0([0,T]; H))\cap L^p_\cP(\Omega; L^p(0,T; V)):\;z=z^d+z^s\cdot W\,,\right.\\
  &\qquad\left.
  z^d\in L^q_\cP(\Omega; W^{1,q}(0,T; V^*))\,, \quad 
  z^s\in L^2_\cP(\Omega; L^2(0,T; \cL^2(U,H)))\right\}\,.
\end{align*}
Note that $\mathcal U$ can be written in compact form as
\begin{align*}
\mathcal U&=
L^2(\Omega; C^0([0,T]; H))\cap L^p_\cP(\Omega; L^p(0,T; V)) \cap \mathcal I^{q,2}(V^*,H)\,,
\end{align*}
so that in particular $\mathcal U$ is a Banach space.

With this notation,
the process $u$ solves the problem \eqref{1}--\eqref{3} if and only if
\beq\label{prob}
  u \in \U\,, \qquad 
  \begin{cases}
  \partial_t u^d + A(u) \ni 0\,,\\
  u^d(0)=u_0\,,\\
  u^s=B\,.
  \end{cases}
\eeq
In such a case, \eqref{1}--\eqref{3} are satisfied with the choice
$\xi:=-\partial_t u^d$.

As mentioned, the WED approach consists in minimizing an
$\eps$-dependent
functional over entire trajectories and passing to the limit in the
parameter $\eps$. This procedure results in an elliptic regularization 
in time, hence delivering regular approximations. In particular, the
differential problem \eqref{eq:0} is reformulated as a linearly constrained
convex minimization. In the abstract setting of \eqref{1}-\eqref{3}, letting $\eps>0$
we introduce the WED functional 
\[
I_\eps:L^p_\cP(\Omega; L^p(0,T; V))\cap \mathcal I^{2,2}(H,H)=:\mathcal V\to [0,+\infty]\,,
\]
as
\begin{align*}
  I_\eps(u):=
\left\{
    \begin{array}{ll}
  \E\displaystyle\int_0^Te^{{-s}/{\eps}}
  \left[\frac\eps2\norm{\partial_t u^d(s)}_H^2 + \Phi(s,u(s))
  + \frac12\norm{(u^s- B)(s)}^2_{\cL^2(U,H)}\right]\,\d s \\[3mm]
   \qquad\text{if } u^d(0)=u_{0,\eps}\,,\\
 +\infty\\ 
 \qquad\text{if } u^d(0)\neq u_{0,\eps}\,.
    \end{array}
\right.
\end{align*}
We qualify the $\eps$-dependent  initial data $(u_{0,\eps})_\eps$ 
above by requiring that 
the sequence
\beq\label{data:eps}
  (u_{0,\eps})_{\eps>0}\subset L^p(\Omega,\cF_0;V_0)
\eeq
is well-prepared, in the sense that, as $\eps\searrow0$,
\begin{align}
\label{ip_init1}
&u_{0,\eps}\to u_0 \quad\text{in } L^2(\Omega, \cF_0; H)\,, 
&&\eps \norm{u_{0,\eps}}^p_{L^p(\Omega, \cF_0 ; V_0)}\to 0\,.
\end{align}
The existence of sequences fulfilling \eqref{data:eps}--\eqref{ip_init1} 
follows directly from {\bf H2}
and the density of $V_0\embed H$, by
standard regularization techniques. 

Minimizers of $I_\eps$ will be proved to belong to the space

\begin{align*}
  \U_{reg}&:=\left\{z\in
  L^2(\Omega; C^0([0,T]; H))\cap L^p_\cP(\Omega; L^p(0,T; V)):\;z=z^d+z^s\cdot W\,,\right.\\
  &\qquad\left.
  z^d\in L^q_\cP(\Omega; C^1([0,T]; V_0^*))\cap L^2_\cP(\Omega; H^1(0,T; H))\right.\,,\\
  &\qquad\left.z^s\in L^2_\cP(\Omega; L^2(0,T; \cL^2(U,H)))\,,\right.\\
  &\qquad\left.\partial_t z^d=(\partial_t z^d)^d + (\partial_t z^d)^s\cdot W\,,\right.\\
  &\qquad\left.(\partial_t z^d)^d\in L^q_\cP(\Omega; W^{1,q}(0,T; V^*))\,,
  \quad(\partial_t z^d)^s\in L^2_\cP(\Omega; L^2(0,T; \cL^2(U,H)))\right\}\,.
\end{align*}
Again, note that a more compact notation for $\mathcal U_{reg}$ reads
\[
  \mathcal U_{reg}=\{z\in L^p_\cP(\Omega; L^p(0,T; V))
  \cap \mathcal I^{2,2}(H,H):
  \partial_t z^d\in\mathcal I^{q,2}(V^*, H)\}\,.
\]
Let us point out in particular that $\mathcal U_{reg}\embed \mathcal V\embed \mathcal U$
with continuous inclusions.

The Euler-Lagrange equation for functional $I_\eps$ corresponds
to the $\eps$-regularized problem 
\beq
  \label{prob_eps}
  u_\eps\in\U_{reg}\,, \qquad
  \begin{cases}
    -\eps\partial_t(\partial_t u^d_\eps)^d + \partial_t u^d_\eps + A(u_\eps) \ni 0\,,\\
    \eps\partial_tu^d_\eps(T)=0\,,\\
    u_\eps^d(0)=u_{0,\eps}\,,\\
     u_\eps^s=B + \eps(\partial_t u^d_\eps)^s\,.
  \end{cases}
\eeq
Note that the second-order problem \eqref{prob_eps} can be seen as a system 
of two equations of first order in time, one forward and one backward, 
by using the classical substitution $v_\eps:=\partial_tu^d_\eps$.
Indeed, with this notation \eqref{prob_eps} is equivalent to 
\beq
  \label{prob_eps_bis}
  \begin{cases}
    \d u_\eps = v_\eps\,\d t + (B+\eps G_\eps)\,\d W\,,\\
    u_\eps(0)=u_{0,\eps}\,,
  \end{cases}
  \qquad
  \begin{cases}
    -\eps\d v_\eps + v_\eps\,\d t + A(u_\eps)\,\d t \ni -\eps G_\eps\,\d W\,,\\
    \eps v_\eps(T)=0\,.
  \end{cases}
\eeq
Note that the variables of the forward-backward system
\eqref{prob_eps_bis} are three,
namely $u_\eps$, $v_\eps$, and $G_\eps$. Indeed, while
the forward equation has a unique variable ($u_\eps$), 
the concept of solution for the backward stochastic equation requires
the two variables $v_\eps$ and $G_\eps$
due 
to the need of representation theorems for martingales.
In particular, we have that 
$G_\eps=v_\eps^s$ is uniquely determined by the backward stochastic equation.

The main result of the paper reads as follows.

\begin{thm}[Weighted Energy-Dissipation approach]\label{thm:00} 
Assume \emph{\bf H0--H2}. Then:
\begin{itemize}
\item[i)] \emph{(Minimization)} For all $\eps>0$ the functional
$I_\eps$ admits a unique global mimizer $u_\eps \in \mathcal
V$.\medskip
\item[ii)] \emph{(Euler-Lagrange equation)} 
The minimizer also satisfies $u_\eps \in \mathcal U_{reg}$ and it is the unique solution
to the problem \eqref{prob_eps}. Namely, there exists a unique
triplet $(\xi_\eps,v_\eps ,G_\eps)$ with
\begin{align*}
  &\xi_\eps\in L^q_\cP(\Omega; L^q(0,T; V^*))\,,\\
  &v_\eps\in L^q(\Omega; C^0([0,T]; V_0^*))\cap L^2_\cP(\Omega; L^2(0,T; H))\,,\\
  &G_\eps\in L^q_\cP(\Omega; L^2(0,T; \cL^2(U,V_0^*)))\,,
  \end{align*}
 such that
  \begin{align*}
  \xi_\eps(\omega,t) \in A(\omega,t,u_\eps(\omega,t)) \quad\text{for a.e.~}(\omega,t)\in\Omega\times(0,T)\,,
  \end{align*}
  and 
  \begin{align*}
    u_\eps(t)&=u_{0,\eps} + \int_0^tv_\eps(s)\,\d s + 
    \int_0^t(B+\eps G_\eps)(s)\,\d W(s)\,, \\
    \eps v_\eps(t)&+\int_t^Tv_\eps(s)\,\d s + \int_t^T\xi_\eps(s)\,\d s=
    -\eps\int_t^TG_\eps(s)\,\d W(s)\,,
  \end{align*}
  for every $t\in[0,T]$, $\P$-almost surely. In particular, it holds that
  $\partial_t u_\eps^d= v_\eps$, $u_\eps^s=B+\eps G_\eps$, and
  $v_\eps^s=G_\eps$. \medskip

\item[iii)] \emph{(Convergence)} As $\eps\to0$ it holds that 
  \begin{align*}
  u_\eps\wto u \quad&\text{in } L^p_\cP(\Omega; L^p(0,T; V))\cap 
  L^q_\cP(\Omega; W^{s,q}(0,T; V_0^*))
  \quad\forall\,s\in(0,1/2)\,,\\
  v_\eps\wto-\xi \quad&\text{in } L^q_\cP(\Omega; L^q(0,T; V_0^*))\,,\\
  \xi_\eps\wto\xi \quad&\text{in } L^q_\cP(\Omega; L^q(0,T; V^*))\,,\\
  \eps v_\eps\to0 \quad&\text{in } L^q(\Omega; C^0([0,T]; V_0^*)) \cap
  L^2_\cP(\Omega; L^2(0,T; H))\,,\\
  \eps G_\eps\to 0\quad&\text{in } L^q_\cP(\Omega; L^2(0,T; \cL^2(U,V_0^*)))\,,
  \end{align*}
 where $(u,\xi)$ is the unique solution to the problem
 \eqref{prob} in the sense of \eqref{1}-\eqref{3}. 
  Furthermore, if $V\embed H$ compactly and $p<4$, it also holds that 
  \[
  u_\eps \to u \quad\text{in } 
  L^r_\cP(\Omega; L^p(0,T; H)) \qquad\forall\,r\in[1,p)\,.
  \] 
\end{itemize}
\end{thm}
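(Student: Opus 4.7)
I would address the three items in the logical order (ii) $\to$ (iii) $\to$ (i). Since $I_\eps$ is convex, any solution of the Euler-Lagrange system \eqref{prob_eps} is automatically a global minimizer, so the bulk of the analytic work is to prove well-posedness of the forward-backward SPDE in (ii) and then to obtain $\eps$-uniform bounds that allow the passage to the limit in (iii). The main obstacle, as the authors stress in the introduction, is the identification of the Euler-Lagrange equation itself: since nonzero martingales prevent one from using deterministic test functions dense in $L^2(\Omega;L^2(0,T))$, the backward-in-time process $v_\eps$ cannot be singled out by integration by parts against compactly supported smooth functions. This forces the systematic use of the It\^o-process spaces $\mathcal I^{s,r}$, of a martingale representation theorem in the Hilbert space $V_0^*$ (this is the raison d'\^etre of the auxiliary space $V_0$), and of an It\^o formula in $V_0^*$.

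\textbf{Part (ii): the forward-backward Euler-Lagrange system.} I would first regularize $\Phi$ by a Moreau-Yosida approximation $\Phi_\lambda$, producing a single-valued Lipschitz operator $A_\lambda=\partial\Phi_\lambda$. For the approximate system I would set up a contraction map on processes $u$ as follows: given $u$, solve the backward SPDE
\[
  -\eps\,\d v + v\,\d t + A_\lambda(\cdot,u)\,\d t = -\eps\, G\,\d W,\qquad \eps v(T)=0
\]
in $V_0^*$, producing the pair $(v,G)$ via the variational theory of backward SPDEs combined with the martingale representation in $V_0^*$; then solve the forward linear SDE for the updated $u$ with drift $v$ and diffusion $B+\eps G$ and datum $u_{0,\eps}$. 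The exponential weight induced by the factor $e^{-t/\eps}$ intrinsic to the backward equation gives a contraction in a suitably weighted norm on the entire interval $[0,T]$. It\^o's formula in $V_0^*$ applied to $\tfrac{\eps^2}{2}\|v_\lambda\|_{V_0^*}^2$ and in $H$ applied to $\tfrac12\|u_\lambda\|_H^2$, combined with \textbf{H1}, provides bounds uniform in $\lambda$. Sending $\lambda\to0$ and identifying the weak limit of $A_\lambda(u_\lambda)$ by a standard monotonicity plus lower-semicontinuity argument yields a solution of \eqref{prob_eps}; uniqueness follows from the monotonicity of $A$.

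\textbf{Part (iii): convergence as $\eps\to 0$.} Starting from \eqref{prob_eps}, I would apply It\^o's formula to $\tfrac12\|u_\eps\|_H^2$ in the forward equation and to $\tfrac{\eps^2}{2}\|v_\eps\|_{V_0^*}^2$ in the backward equation, add the resulting identities, and exploit the coercivity from \textbf{H1} together with the well-preparedness \eqref{ip_init1} to obtain
\begin{align*}
  \sup_\eps\bigl(\|u_\eps\|_{L^p_\cP(\Omega;L^p(0,T;V))}+\|\xi_\eps\|_{L^q_\cP(\Omega;L^q(0,T;V^*))}\bigr)&<\infty,\\
  \|\eps v_\eps\|_{L^2_\cP(\Omega;L^2(0,T;H))}+\|\eps G_\eps\|_{L^q_\cP(\Omega;L^2(0,T;\cL^2(U,V_0^*)))}&\to 0,
\end{align*}
together with a fractional time estimate on $u_\eps^d$ coming from the $L^q(0,T;V_0^*)$ bound on $v_\eps$ and a Burkholder estimate on the stochastic part. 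Weak compactness yields subsequential limits $u_\eps\wto u$, $\xi_\eps\wto\xi$, $v_\eps\wto-\xi$ (the last identification uses that $\eps v_\eps\to 0$ and $\eps G_\eps\to 0$ in the backward equation), and passing to the limit in the forward equation gives \eqref{3}. The nonlinear identification $\xi\in A(u)$ is by a Minty-type argument: I would establish $\limsup_\eps\E\int_0^T\ip{\xi_\eps}{u_\eps}\,\d t\leq \E\int_0^T\ip{\xi}{u}\,\d t$ via the It\^o formulae for $\|u_\eps\|_H^2$ and $\|u\|_H^2$, using that the spurious boundary terms vanish because of \eqref{ip_init1} and $\eps v_\eps(T)=0$. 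The strong convergence in $L^r(\Omega;L^p(0,T;H))$ under $V\cembed H$ and $p<4$ follows by a stochastic Aubin-Lions argument on the decomposition $u_\eps=u_\eps^d+u_\eps^s\cdot W$: the deterministic part is tight by the $W^{1,q}(0,T;V^*)\cap L^p(0,T;V)$ bound, the stochastic part by the fractional estimate.

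\textbf{Part (i): minimizer.} The functional $I_\eps$ is convex on $\mathcal{V}$ and strictly convex in the variables $\partial_t u^d$ and $u^s$, hence in $u$ under the affine constraint $u^d(0)=u_{0,\eps}$ by the unique It\^o decomposition \eqref{con}; hence the minimizer is unique whenever it exists. Existence is then free: the solution $u_\eps\in\mathcal U_{reg}$ of the Euler-Lagrange system constructed in (ii) annihilates the Gateaux derivative of $I_\eps$ along every admissible variation in $\mathcal V$, and convexity of $I_\eps$ automatically promotes this first-order condition to global minimality. Alternatively, the direct method applies with no further effort, as the coercivity from \textbf{H1} and the quadratic term in $\partial_t u^d$ bound minimizing sequences in $\mathcal V$, and joint convexity together with the normal-integrand structure of $\Phi$ deliver the required weak lower semicontinuity.
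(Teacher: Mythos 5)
Your overall architecture (ii)$\to$(iii)$\to$(i) matches the paper's, and the outline of Part (iii) is essentially correct, but there are two genuine gaps at the points where the real work happens. First, in Part (ii) you propose to solve the $\lambda$-approximated system by a contraction: solve the backward SPDE for $(v,G)$ given $u$, then the forward SDE for the new $u$, and claim the weight $e^{-t/\eps}$ gives a contraction on all of $[0,T]$. This is a \emph{fully coupled} forward--backward system (the forward diffusion contains $G$, the backward driver contains $A_\lambda(u)$ with Lipschitz constant $1/\lambda$), and for such systems a Picard iteration is in general contractive only on a short time horizon shrinking with the Lipschitz constant; no argument is given for why the weighted norm closes on $[0,T]$ uniformly as $\lambda\searrow0$. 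The paper avoids this entirely: it obtains the approximate solution as the unique minimizer of the approximated WED functional $I_{\eps\lambda}$ on the Hilbert space $\mathcal I^{2,2}(H,H)$ (direct method, Proposition~\ref{equiv_lam}) and then \emph{derives} the forward--backward system from the stationarity condition $0\in\partial I_{\eps\lambda}(z_{\eps\lambda})$. That derivation --- testing with $h_K=\int_0^\cdot K\,\d s$ and $h_L=L\cdot W$, replacing the non-adapted term $\int_0^Te^{-s/\eps}A_\lambda(s,z_{\eps\lambda}(s))\,\d s$ by its conditional expectation, and invoking the martingale representation theorem to produce $G_{\eps\lambda}$ --- is precisely the identification of the Euler--Lagrange equation that you flag as the main obstacle in your opening paragraph, yet your proof never actually performs it.

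Second, your Part (i) asserts that the solution of \eqref{prob_eps} ``annihilates the Gateaux derivative of $I_\eps$'' and that convexity upgrades this to global minimality. Since $\Phi$ is only convex lower semicontinuous, this implicitly requires the sum rule $\partial I_\eps=\partial(I_\eps^1+S_\eps)+\partial I_\eps^2$; but $D(I_\eps^2)=L^p_\cP(\Omega;L^p(0,T;V))$ may have empty interior in $\mathcal I^{2,2}(H,H)$, so the standard Rockafellar/Brezis condition fails and the sum rule is not available --- the paper states this explicitly at the start of Section~\ref{sec:equiv}. Your fallback (direct method) produces \emph{a} minimizer but does not identify it with the Euler--Lagrange solution, which is what Theorem~\ref{thm:00}.ii requires. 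The paper's workaround is to pass to the limit $\lambda\searrow0$ in the minimality inequality $I_{\eps\lambda}(u_{\eps\lambda})\le I_{\eps\lambda}(z)$, using $\Phi_\lambda\le\Phi$ on one side and weak lower semicontinuity plus $J_\lambda(u_{\eps\lambda})\wto u_\eps$ on the other (Proposition~\ref{sol_min}); some version of this limiting argument is indispensable. A further technical point: in your a priori estimates you invoke It\^o's formula for $\tfrac{\eps^2}{2}\|v\|_{V_0^*}^2$, but for $p>2$ (hence $q<2$) the estimate only closes for the $q$-th power of the $V_0^*$-norm, whose It\^o formula requires the smooth approximation $\gamma_\delta(r)=(r^2+\delta^2)^{q/4}$ since $|\cdot|^{q/2}$ is not $C^2$; and the coercivity term $\ip{\xi_\eps}{u_\eps}$ is generated by the It\^o formula for the cross product $(\eps v_\eps,u_\eps)$, not by the two norm formulas you list.
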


The proof of Theorem \ref{thm:00} is recorded in the coming Sections
\ref{sec:thm1}-\ref{sec:equiv}. In particular, Part ii of the theorem
is proved in Section \ref{sec:thm1}, where we focus on the
well-posedness of the forward-backward regularized problem
\eqref{prob_eps}. Then, the convergence Part iii of  Theorem
\ref{thm:00} is proved in Section \ref{sec:thm2}. Eventually, the
existence of minimizers is checked in Section \ref{sec:equiv}.

This counterintuitive structuring of the proof of Theorem \ref{thm:00}
is motivated by the fact that the existence of minimizers
of $I_\eps$ follows from proving that the corresponding Euler-Lagrange
problem has a unique solution. One hence has to check the
well-posedness of problem \eqref{prob_eps} first.


\section{The forward-backward regularized problem}
\label{sec:thm1}
This section is devoted to proof of the well-posedness of 
the $\eps$-regularized problem \eqref{prob_eps}
in the sense of Theorem~\ref{thm:00}.ii.
Throughout the section, $\eps>0$ is fixed. 

First of all, let $A_H$ be the random and time-dependent 
unbounded operator on $H$ defined as 
\[
  A_H:\Omega\times[0,T]\times H\to 2^H\,,\qquad
  A_H(\omega,t,z):=A(\omega,t,z)\cap H\,, \quad (\omega,t)\in\Omega\times[0,T]\,,
  \quad z\in H\,.
\]
It is not difficult to show that, for every $(\omega,t)\in\Omega\times[0,T]$, 
the unbounded operator $A_H(\omega,t,\cdot)$ is maximal monotone on $H$.
Indeed, the monotonicity is an immediate consequence of the monotonicity of $A$.
As for the maximality, note that the operator
$I+A(\omega,t,\cdot):V\to V^*$, where $I$ is the identity in $H$
(namely, $\langle I v,w \rangle =(v,w) \ \forall v,w \in H$),
is maximal monotone and coercive by assumption on $A$, hence it is
surjective, which yields the maximality of $A_H(\omega,t,
\cdot)$.
Furthermore, since $A$ is $\cP\otimes\cB(V)/\cB(V^*)$-Effros-measurable, 
it follows that $A_H$ is $\cP\otimes\cB(H)/\cB(H)$-Effros-measurable
as well.

\subsection{The approximation}
Since $A_H$ is maximal monotone on $H$ in its last component, 
for any $\lambda>0$ 
its resolvent and its Yosida approximation are well defined, respectively, as
\[
  J_\lambda:\Omega\times[0,T]\times H\to H\,, \qquad
  J_\lambda(\omega,t,z):=(I+\lambda A_H(\omega,t,\cdot))^{-1}(z)\,, \quad
  (\omega,t,z)\in\Omega\times[0,T]\times H\,,
\]
and
\[
  A_\lambda:\Omega\times[0,T]\times H\to H\,, \qquad
  A_\lambda(\omega,t,z):=\frac{v - J_\lambda(\omega,t,z)}\lambda\,, 
  \quad(\omega,t,z)\in\Omega\times[0,T]\times H\,.
\]
It is well-known that $J_\lambda$ and $A_\lambda$ are
$1$- and $1/\lambda$-Lipschitz-continuous in their third component, respectively, 
uniformly in $\Omega\times[0,T]$.
Moreover, the Effros-measurability of $A_H$ implies that 
$J_\lambda$ and $A_\lambda$ are $\cP\otimes\cB(H)/\cB(H)$-measurable
(see for example \cite[Prop.~3.12]{liu-step}).

For any $\lambda>0$, we consider the approximated problem
\beq
  \label{app}
  \begin{cases}
    
    \d u_{\eps\lambda} = v_{\eps\lambda}\,\d t + (B+\eps G_{\eps\lambda})\,\d W\,,  \\
    u_{\eps\lambda}(0)=u_{0,\eps}\,,
  \end{cases}
  \qquad
  \begin{cases}
    -\eps\d v_{\eps\lambda} + v_{\eps\lambda}\,\d t + A_\lambda(u_{\eps\lambda})\,\d t 
    = - \eps G_{\eps\lambda}\,\d W\,,\\
    \eps v_{\eps\lambda}(T)=0\,.
  \end{cases}
\eeq
We say that a triplet $(u_{\eps\lambda}, v_{\eps\lambda}, G_{\eps\lambda})$ 
is a solution to the approximated problem \eqref{app} if
\[
  (u_{\eps\lambda}, v_{\eps\lambda}, G_{\eps\lambda})  \in L^2(\Omega; C^0([0,T]; H))\times 
  L^2(\Omega; C^0([0,T]; H)) \times 
  L^2_{\cP}(\Omega; L^2(0,T;  \cL^2(U,H))
\]
and it holds that 
\begin{align*}
    u_{\eps\lambda}(t)&=u_{0,\eps} + \int_0^tv_{\eps\lambda}(s)\,\d s 
    +  \int_0^t(B+\eps G_{\eps\lambda})(s)\,\d W(s)\,,   \\
    \eps v_{\eps\lambda}(t)&+\int_t^Tv_{\eps\lambda}(s)\,\d s 
    + \int_t^TA_\lambda(s,u_{\eps\lambda}(s))\,\d s=
    -\eps\int_t^TG_{\eps\lambda}(s)\,\d W(s)\,,
  \end{align*}
  for every $t\in[0,T]$, $\P$-almost surely.

  \subsection{Existence of solutions to the approximated problem}
  We prove here that the approximated problem \eqref{app} admits a
  solution $(u_{\eps\lambda}, v_{\eps\lambda}, G_{\eps\lambda})$.
  To this end,
we characterize the 
  the unique solution $(u_{\eps\lambda}, v_{\eps\lambda}, G_{\eps\lambda})$
  as the unique minimizer of a suitable approximated WED functional.
  
  Let us first introduce some preliminary notation. 
  Note that we have the representation 
\[
  I_\eps= I_\eps^1 + S_\eps +  I_\eps^2\,,
\]
where
  \begin{align*}
  &I_\eps^1:\mathcal I^{2,2}(H,H)\to[0,+\infty)\,,\qquad 
  I_\eps^2:L^p_\cP(\Omega; L^p(0,T; V))\to[0,+\infty)\,,\\
  &I_\eps^1(z):=\E\int_0^Te^{-s/\eps}\left[\frac\eps2\norm{\partial_tz^d(s)}^2_H
  +
  \frac12\norm{(z^s-B)(s)}^2_{\cL^2(U,H)}\right]\,\d s\,,
   \quad z\in \mathcal I^{2,2}(H,H)\,,\\
  &I_\eps^2(z):=\E\int_0^Te^{-s/\eps}\Phi(s,z(s))\,\d s\,, \quad z\in L^p_\cP(\Omega; L^p(0,T; V))\,,
  \end{align*}
  and
  \[
  S_\eps:\mathcal I^{2,2}(H,H)\to[0,+\infty]\,, \qquad
  S_\eps(z):=\begin{cases}
  0 \quad&\text{if } z^d(0)=u_{0,\eps}\,,\\
  +\infty \quad&\text{otherwise}\,.
  \end{cases}
  \]
Moreover, it will be useful to introduce the notation
\[
  \mathcal I^{2,2}_0(H,H):=\left\{h\in \mathcal I^{2,2}(H,H): h^d(0)=0\right\}\,.
\]
The natural candidate as WED functional related to 
the approximated problem \eqref{app} is clearly given by 
In this spirit, we introduce the functional 
\[
  I_{\eps\lambda}:\mathcal I^{2,2}(H,H)\to [0,+\infty)\,, \qquad
  I_{\eps\lambda}:=I^1_\eps + S_\eps + I_{\eps\lambda}^2\,,
\]
with
\begin{align*}
  &I_{\eps\lambda}^2:\mathcal I^{2,2}(H,H)\to[0,+\infty)\,,\\
  &I_{\eps\lambda}(z):=\E\int_0^Te^{-s/\eps}\Phi_\lambda(s,z(s))\,\d s\,,
  \quad z\in\mathcal I^{2,2}(H,H)\,,
\end{align*}
where $\Phi_\lambda$ is the Moreau-Yosida regularisation of $\Phi$, i.e.
\begin{align*}
  &\Phi_\lambda:\Omega\times[0,T]\times H\to [0,+\infty)\,,\\
  &\Phi_\lambda(\omega,t,z):=\Phi(\omega,t,J_\lambda(z)) + 
  \frac1{2\lambda}\norm{z-J_\lambda(z)}_H^2\,,
  \qquad (\omega,t,z)\in\Omega\times[0,T]\times H\,.
\end{align*}
It is well known that $\Phi_\lambda(\omega,t,\cdot)$ is G\^ateaux-differentiable
on $H$ with derivative $A_\lambda(\omega,t,\cdot)$, for every $(\omega,t)\in\Omega\times[0,T]$.

We now show that the approximated problem \eqref{app} is
equivalent to the minimization of $I_{\eps\lambda}$.
In this direction, we aim now at characterizing the subdifferential of $I_{\eps\lambda}$.
This will follow after some intermediate steps.

First of all, we characterize the subdifferential of the sum $I_\eps^1 + S_\eps$.
\begin{lem}
  \label{subdiff1}
  The subdifferential of $I^1_\eps + S_\eps:\mathcal I^{2,2}(H,H)\to[0,+\infty]$ 
  is the operator 
  \[
  \partial(I_\eps^1 + S_\eps):\mathcal I^{2,2}(H,H)
  \to 2^{\mathcal I^{2,2}(H,H)^*}
  \]
  defined in the following way:
  \[
   D(\partial(I_\eps^1+S_\eps)):=\{z\in\mathcal I^{2,2}(H,H): z^d(0)=u_{0,\eps}\}\,,
  \]
  and, for every $z\in D(\partial(I_\eps^1+S_\eps))$ and $w\in \mathcal I^{2,2}(H,H)^*$,
  \[
  w\in\partial(I_\eps^1+S_\eps)(z) 
  \]
  if and only if there exists $\tilde w\in\mathcal I_0^{2,2}(H,H)^\perp$ such that 
  \begin{align*}
  &\ip{w}{h}_{\mathcal I^{2,2}(H,H)}\\
  &\qquad=\E\int_0^Te^{-s/\eps}\left[\eps(\partial_t z^d(s), \partial_t h^d(s))
  +((z^s-B)(s), h^s(s))_{\cL^2(U,H)}\right]\,\d s
  +\ip{\tilde w}{h}_{\mathcal I^{2,2}(H,H)}
  \end{align*}
  for every $h\in\mathcal I^{2,2}(H,H)$.
\end{lem}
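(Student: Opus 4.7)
\medskip

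\noindent\textbf{Plan of proof.} The functional $I_\eps^1+S_\eps$ is the sum of the continuous convex quadratic functional $I_\eps^1$ and the indicator $S_\eps$ of the affine subspace $\{z\in\mathcal I^{2,2}(H,H): z^d(0)=u_{0,\eps}\}$. Since the space $\mathcal I^{2,2}(H,H)$ is a Hilbert space (recalling that $s=r=2$ and $H$ is Hilbert) and $I_\eps^1$ is continuous everywhere, I would compute each subdifferential separately and then invoke the Moreau--Rockafellar sum rule.

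\medskip

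First I would compute the G\^ateaux derivative of $I_\eps^1$. Expanding $I_\eps^1(z+th)$ in $t$, using the direct-sum representation $z+th=(z^d+th^d)+(z^s+th^s)\cdot W$ together with the uniqueness of the decomposition, and differentiating the resulting smooth quadratic expression at $t=0$, one obtains
\[
\lim_{t\to 0}\frac{I_\eps^1(z+th)-I_\eps^1(z)}{t}
=\E\int_0^T e^{-s/\eps}\bigl[\eps(\partial_t z^d(s),\partial_t h^d(s))
+((z^s-B)(s),h^s(s))_{\cL^2(U,H)}\bigr]\d s
\]
for every $h\in\mathcal I^{2,2}(H,H)$. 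Since $I_\eps^1$ is convex and the above limit is linear and continuous in $h$, this identifies $I_\eps^1$ as Fr\'echet-differentiable and forces $\partial I_\eps^1(z)=\{DI_\eps^1(z)\}$ to be the singleton given by the right-hand side above.

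\medskip

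Next I would treat $S_\eps$. The functional $S_\eps$ is the indicator function of the affine subspace $\bar z+\mathcal I_0^{2,2}(H,H)$ for any $\bar z$ with $\bar z^d(0)=u_{0,\eps}$ (and such $\bar z$ exists, e.g.\ the constant-in-time process $\bar z=u_{0,\eps}$, which lies in $\mathcal I^{2,2}(H,H)$ by \eqref{data:eps}). By a standard convex-analysis computation, the subdifferential of the indicator of an affine subspace equals, at every point of its domain, the annihilator of the underlying linear subspace; thus
\[
\partial S_\eps(z)=\mathcal I_0^{2,2}(H,H)^\perp
\qquad\text{whenever }z^d(0)=u_{0,\eps},
\]
and is empty otherwise.

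\medskip

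Finally, since $I_\eps^1$ is finite and continuous on all of $\mathcal I^{2,2}(H,H)$, in particular at points of $\mathrm{dom}(S_\eps)$, the Moreau--Rockafellar sum rule applies and yields
\[
\partial(I_\eps^1+S_\eps)(z)=\partial I_\eps^1(z)+\partial S_\eps(z)
=\{DI_\eps^1(z)\}+\mathcal I_0^{2,2}(H,H)^\perp
\]
for every $z$ with $z^d(0)=u_{0,\eps}$, which is exactly the stated characterization with $\tilde w:=w-DI_\eps^1(z)\in\mathcal I_0^{2,2}(H,H)^\perp$. I do not expect serious obstacles here: the only mildly delicate point is to justify the interchange of differentiation with the expectation and time-integral in computing $DI_\eps^1$, which is routine thanks to the quadratic structure and the uniform integrability provided by the exponential weight $e^{-s/\eps}$ and the $L^2$-integrability built into $\mathcal I^{2,2}(H,H)$.
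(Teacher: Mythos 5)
Your proposal is correct and follows essentially the same route as the paper: the explicit quadratic expansion identifying $\partial I_\eps^1$ as a singleton, the identification of $\partial S_\eps(z)$ with $\mathcal I_0^{2,2}(H,H)^\perp$ on its domain, and the additivity of the subdifferentials via a qualification condition (the paper cites \cite[Thm.~2.10]{barbu-monot} with $\operatorname{Int}(D(I_\eps^1))\cap D(S_\eps)\neq\emptyset$, which is the same interiority hypothesis underlying your Moreau--Rockafellar argument). No gaps.
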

\begin{proof}
  First of all, 
  it is clear that $I_\eps^1$ is proper, convex, and lower semicontinuous on
  $\mathcal I^{2,2}(H,H)$. Moreover, 
  we have that $I_\eps^1$ is actually
  G\^ateaux-differentiable. Indeed, 
  for every $z,h\in\mathcal I^{2,2}(H,H)$ and $\delta\neq0$ we have
  \begin{align*}
    \frac{I_\eps^1(z+\delta h)-I_\eps^1(z)}{\delta}&=
    \E\int_0^Te^{-s/\eps}\left[\eps(\partial_tz^d(s), \partial_t
                                                     h^d(s))
    +((z^s-B)(s), h^s(s))_{\cL^2(U,H)}\right]\,\d s\\
    &+\delta\E\int_0^Te^{-s/\eps}\left[\frac\eps2\norm{\partial_t h^d(s)}^2_H
    +\frac12\norm{h^s(s)}_{\cL^2(U,H)}^2\right]\,\d s\,,
  \end{align*}
  where the second term on the right-hand side converges to $0$ as $\delta\to0$
  since $h\in \mathcal I^{2,2}(H,H)$. Hence, $I_\eps^1$ is G\^ateaux-differentiable,
  its G\^ateaux-differential coincides with its subdifferential and it is given by
  \begin{align*}
  &\partial I_\eps^1:\mathcal I^{2,2}(H,H)\to \mathcal I^{2,2}(H,H)^*\,,\\
  &\ip{\partial I_\eps^1(z)}{h}_{\mathcal I^{2,2}(H,H)}=
  \E\int_0^Te^{-s/\eps}\left[\eps(\partial_tz^d(s), \partial_t
    h^d(s))
    +((z^s-B)(s), h^s(s))_{\cL^2(U,H)}\right]\,\d s\,,\\
  &\qquad z,h\in\mathcal I^{2,2}(H,H)\,.
  \end{align*}
  Secondly, $S_\eps$ is proper, convex, and 
  lower semicontinuous on $\mathcal I^{2,2}(H,H)$. Moreover,
  its subdifferential is given by 
  \begin{align*}
  &\partial S_\eps:\mathcal I^{2,2}(H;H)\to 2^{\mathcal I^{2,2}(H,H)^*}\,,\\
  &\partial S_\eps(z):=\left\{w\in\mathcal I^{2,2}(H;H):
  \ip{w}{h}_{\mathcal I^{2,2}(H,H)}=0 \quad\forall\,h\in\mathcal I^{2,2}(H,H), \; h^d(0)=0\right\}\\
  &z\in D(\partial S_\eps):=\left\{z\in I^{2,2}(H,H): z^d(0)=u_{0,\eps}\right\}\,.
  \end{align*}
  In other words, 
  we have that 
  \[
  \partial S_\eps(z)=\mathcal I^{2,2}_0(H,H)^\perp\,, \quad z\in D(\partial S_\eps)\,.
  \]
  Consequently, since $\partial I_\eps^1$ and $\partial S_\eps$ are maximal monotone 
  on $\mathcal I^{2,2}(H,H)$, and 
  \[
  \operatorname{Int}(D(I_\eps^1))\cap D(S_\eps)=
  \mathcal I^{2,2}(H,H)\cap \left\{z\in \mathcal I^{2,2}(H,H): z^d(0)=u_{0,\eps}\right\} \neq\emptyset\,,
  \]
  a classical result on convex analysis (see \cite[Thm.~2.10]{barbu-monot}) ensures that 
  \[
  \partial(I_\eps^1 + S_\eps) = \partial I_\eps^1 + \partial S_\eps\,,
  \]
  with
  \[
  D(\partial(I_\eps^1 + S_\eps)) = D(\partial I_\eps^1)\cap D(\partial S_\eps)=
  D(\partial S_\eps)\,.
  \]
  This implies that, for every $z\in D(\partial(I_\eps^1 + S_\eps))$ and 
  $w\in\partial(I_\eps^1+S_\eps)$,
  we have
  \[
  w=\partial I_\eps^1(z) + \tilde w
  \]
  for a certain $\tilde w\in \mathcal I^{2,2}_0(H,H)^\perp$, as required.
\end{proof}

Now, we characterize the subdifferential of $I_{\eps\lambda}^2$.
\begin{lem}
  \label{subdiff2}
  The subdifferential of $I_{\eps\lambda}^2:\mathcal I^{2,2}(H,H)\to [0,+\infty)$
  is the single-valued operator 
  \begin{align*}
  &\partial I_{\eps\lambda}:\mathcal I^{2,2}(H,H)\to \mathcal I^{2,2}(H,H)^*\,,\\
  &\ip{\partial I_{\eps\lambda}^2(z)}{h}_{\mathcal I^{2,2}(H,H)}
  =\E\int_0^Te^{-s/\eps}(A_\lambda(s,z(s)), h(s))\,\d s\,,
  \qquad z,h\in\mathcal I^{2,2}(H,H)\,.
  \end{align*}
  In particular, it holds that $D(\partial I_{\eps\lambda}^2)=\mathcal I^{2,2}(H,H)$.
\end{lem}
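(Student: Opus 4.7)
The plan is to show that $I_{\eps\lambda}^2$ is convex, proper, and G\^ateaux-differentiable on the whole of $\mathcal I^{2,2}(H,H)$, so that its subdifferential reduces to the single-valued G\^ateaux derivative with full domain.

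First I would check that $I_{\eps\lambda}^2$ is well-defined and finite on $\mathcal I^{2,2}(H,H)$. Since $\Phi_\lambda(\omega,t,\cdot)$ is convex with $(1/\lambda)$-Lipschitz derivative $A_\lambda(\omega,t,\cdot)$, one has the pointwise bound
\[
0 \leq \Phi_\lambda(\omega,t,z) \leq \Phi_\lambda(\omega,t,0) + \norm{A_\lambda(\omega,t,0)}_H \norm{z}_H + \frac{1}{2\lambda}\norm{z}_H^2
\]
for all $z\in H$. Combined with the continuous embedding $\mathcal I^{2,2}(H,H)\embed L^2(\Omega;C^0([0,T];H))$, which follows from the Burkholder--Davis--Gundy inequality applied to the stochastic component $z^s\cdot W$, this yields $I_{\eps\lambda}^2(z)<\infty$ for every $z\in\mathcal I^{2,2}(H,H)$. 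Convexity is inherited directly from convexity of $\Phi_\lambda$ in its third variable, so $I_{\eps\lambda}^2$ is a proper convex functional.

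Next, I would compute the G\^ateaux differential by passing to the limit in difference quotients. For $z,h\in\mathcal I^{2,2}(H,H)$ and $\delta\in\erre\setminus\{0\}$, G\^ateaux-differentiability of $\Phi_\lambda(\omega,s,\cdot)$ gives
\[
q_\delta(\omega,s):=\frac{\Phi_\lambda(\omega,s,z(\omega,s)+\delta h(\omega,s)) - \Phi_\lambda(\omega,s,z(\omega,s))}{\delta} \to (A_\lambda(s,z(s)),h(s))
\]
pointwise $(\P\otimes\d s)$-a.e.\ as $\delta\to 0$. By convexity of $\Phi_\lambda$ and the $(1/\lambda)$-Lipschitz bound on $A_\lambda$, one has the estimate
\[
|q_\delta(\omega,s)| \leq \norm{A_\lambda(\omega,s,z(\omega,s))}_H\norm{h(\omega,s)}_H + \frac{|\delta|}{\lambda}\norm{h(\omega,s)}_H^2,
\]
and for $|\delta|\leq 1$ the right-hand side is dominated by a function integrable on $\Omega\times(0,T)$, thanks to $\norm{A_\lambda(s,z(s))}_H \leq \norm{A_\lambda(s,0)}_H + (1/\lambda)\norm{z(s)}_H$ and Cauchy--Schwarz. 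Dominated convergence then yields
\[
\lim_{\delta\to 0} \frac{I_{\eps\lambda}^2(z+\delta h) - I_{\eps\lambda}^2(z)}{\delta} = \E\int_0^T e^{-s/\eps}(A_\lambda(s,z(s)),h(s))\,\d s.
\]

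The linear functional $h\mapsto \E\int_0^T e^{-s/\eps}(A_\lambda(s,z(s)),h(s))\,\d s$ is continuous on $\mathcal I^{2,2}(H,H)$, again by Cauchy--Schwarz and the continuous embedding $\mathcal I^{2,2}(H,H)\embed L^2(\Omega\times(0,T);H)$, so $I_{\eps\lambda}^2$ is G\^ateaux-differentiable on all of $\mathcal I^{2,2}(H,H)$. Since it is convex and finite-valued, its subdifferential coincides with the G\^ateaux derivative and, in particular, $D(\partial I_{\eps\lambda}^2)=\mathcal I^{2,2}(H,H)$, as required. There is no serious obstacle here: the lemma is essentially a computational fact made possible by the Lipschitz regularity of the Yosida regularization, and the only care needed is in the dominated-convergence bound used to differentiate under the integral.
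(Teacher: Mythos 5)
Your proof is correct and is precisely the ``classical computation'' that the paper dispatches with a single citation to \cite[Prop.~1.1]{ken_parab}: convexity plus everywhere-G\^ateaux-differentiability of $I^2_{\eps\lambda}$ (obtained from the difference quotients of $\Phi_\lambda$, the $1/\lambda$-Lipschitz bound on $A_\lambda$, the embedding $\mathcal I^{2,2}(H,H)\embed L^2(\Omega; C^0([0,T];H))$, and dominated convergence) identifies the subdifferential as the single-valued G\^ateaux derivative with full domain. The only point worth flagging is that both the finiteness of $I^2_{\eps\lambda}$ and your domination step tacitly use the integrability of $\Phi_\lambda(\cdot,\cdot,0)$ and $A_\lambda(\cdot,\cdot,0)$ over $\Omega\times(0,T)$, an assumption the paper itself makes implicitly when declaring $I^2_{\eps\lambda}$ to be $[0,+\infty)$-valued, so this is not a gap relative to the paper's argument.
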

\begin{proof}
  The proof is consequence of a classical computation:
  see for example \cite[Prop.~1.1]{ken_parab}.
\end{proof}

We are now able to characterize 
the subdifferential of the functional $I_{\eps\lambda}$.
\begin{lem}
  \label{subdiff_lam}
  The subdifferential of $I_{\eps\lambda}:\mathcal I^{2,2}(H,H)\to[0,+\infty]$
  is the operator 
  \[
  \partial I_{\eps\lambda}:\mathcal I^{2,2}(H,H)
  \to 2^{\mathcal I^{2,2}(H,H)^*}
  \]
  defined in the following way:
  \[
  D(\partial I_{\eps\lambda}):=\{z\in\mathcal I^{2,2}(H,H): z^d(0)=u_{0,\eps}\}\,,
  \]
  and,
  for every $z\in D(\partial I_{\eps\lambda})$ and $w\in \mathcal I^{2,2}(H,H)^*$,
  \[
  w\in\partial I_{\eps\lambda}(z) 
  \]
  if and only if there exists $\tilde w\in\mathcal I_0^{2,2}(H,H)^\perp$ such that,
  for every $h\in\mathcal I^{2,2}(H,H)$,
  \begin{align*}
  &\ip{w}{h}_{\mathcal I^{2,2}(H,H)}=\E\int_0^Te^{-s/\eps}\left[\eps(\partial_t z^d(s), \partial_t h^d(s))
  +(A_\lambda(s,z(s)), h(s)) \right.\\
  &\qquad\qquad\qquad\qquad\qquad
  \left.+((z^s-B)(s), h^s(s))_{\cL^2(U,H)}\right]\,\d s
  +\ip{\tilde w}{h}_{\mathcal I^{2,2}(H,H)}\,.
  \end{align*}
  In particular, for every $z\in \mathcal I^{2,2}(H,H)$ with $z^d(0)=u_{0,\eps}$
  and $w\in \partial I_{\eps\lambda}(z)$ it holds that
   \begin{align*}
  &\ip{w}{h}_{\mathcal I^{2,2}(H,H)}\\
  &=\E\int_0^Te^{-s/\eps}\left[\eps(\partial_tz^d(s), \partial_t h^d(s))
    +(A_\lambda(s,z(s)), h(s))
    +((z^s-B)(s), h^s(s))_{\cL^2(U,H)}\right]\,\d s 
  \end{align*}
  for every $h\in\mathcal I^{2,2}_0(H,H)$.
\end{lem}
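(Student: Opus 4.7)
The plan is to obtain the characterization by applying the sum rule for convex subdifferentials to the decomposition $I_{\eps\lambda} = (I_\eps^1 + S_\eps) + I_{\eps\lambda}^2$, combining Lemma~\ref{subdiff1} and Lemma~\ref{subdiff2}. All three summands are proper, convex, and lower semicontinuous on the Banach space $\mathcal I^{2,2}(H,H)$, so the only point that needs to be addressed is the qualification condition ensuring additivity of subdifferentials.

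First I would observe that, by Lemma~\ref{subdiff2}, $\partial I_{\eps\lambda}^2$ is a single-valued operator defined on the whole of $\mathcal I^{2,2}(H,H)$. Since $A_\lambda(\omega,t,\cdot)$ is $(1/\lambda)$-Lipschitz on $H$, uniformly in $(\omega,t)$, the representation in Lemma~\ref{subdiff2} actually shows that $\partial I_{\eps\lambda}^2$ is continuous from $\mathcal I^{2,2}(H,H)$ to its dual; in particular $I_{\eps\lambda}^2$ is everywhere continuous. This allows me to invoke the classical Moreau--Rockafellar sum rule (see \cite[Thm.~2.10]{barbu-monot} as used in Lemma~\ref{subdiff1}) to conclude that
\[
  \partial I_{\eps\lambda} \;=\; \partial(I_\eps^1 + S_\eps) + \partial I_{\eps\lambda}^2,
\]
with
\[
  D(\partial I_{\eps\lambda}) = D(\partial(I_\eps^1 + S_\eps)) \cap D(\partial I_{\eps\lambda}^2) = \{z\in\mathcal I^{2,2}(H,H):\, z^d(0)=u_{0,\eps}\}.
\]

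Substituting into this identity the explicit expressions supplied by Lemma~\ref{subdiff1} (namely the G\^ateaux differential of $I_\eps^1$ plus an element of $\mathcal I^{2,2}_0(H,H)^\perp$) and by Lemma~\ref{subdiff2} (the term $\E\int_0^T e^{-s/\eps}(A_\lambda(s,z(s)),h(s))\,\d s$) yields exactly the stated characterization of $\partial I_{\eps\lambda}(z)$ for $z\in D(\partial I_{\eps\lambda})$.

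Finally, for the concluding ``In particular'' statement, I would specialize $h$ to lie in $\mathcal I^{2,2}_0(H,H)$: by the very definition of the annihilator, the term $\langle \tilde w, h\rangle_{\mathcal I^{2,2}(H,H)}$ vanishes, and the pairing $\langle w,h\rangle_{\mathcal I^{2,2}(H,H)}$ reduces to the three explicit integrals. No significant obstacle is expected here, as the verification of the qualification condition via continuity of $\partial I_{\eps\lambda}^2$ is the only non-bookkeeping step, and it is immediate from the Lipschitz character of $A_\lambda$.
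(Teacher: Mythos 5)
Your proposal is correct and follows essentially the same route as the paper: decompose $I_{\eps\lambda}=(I_\eps^1+S_\eps)+I_{\eps\lambda}^2$, verify the qualification condition (the paper checks $D(I_\eps^1+S_\eps)\cap\operatorname{Int}(D(I_{\eps\lambda}^2))\neq\emptyset$, which is the same point as your continuity-of-$I_{\eps\lambda}^2$ observation since $D(I_{\eps\lambda}^2)$ is the whole space), apply \cite[Thm.~2.10]{barbu-monot}, and combine Lemmas~\ref{subdiff1} and~\ref{subdiff2}, with the final claim following from the definition of $\mathcal I^{2,2}_0(H,H)^\perp$.
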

\begin{proof}
  Since $D(I_{\eps}^1+S_\eps)=\{z\in\mathcal I^{2,2}(H,H): z^d(0)=u_{0,\eps}\}$
  and $D(I_{\eps\lambda}^2)=\mathcal I^{2,2}(H,H)$, we have
  \[
  D(I_{\eps}^1+S_\eps)\cap \operatorname{Int}(D(I_{\eps\lambda}^2))=
  \{z\in\mathcal I^{2,2}(H,H): z^d(0)=u_{0,\eps}\}\neq\emptyset\,.
  \]
  Hence, by the classical result \cite[Thm.~2.10]{barbu-monot}, we infer that 
  \[
  \partial I_{\eps\lambda}=\partial (I_{\eps}^1+S_\eps) + \partial I_{\eps\lambda}^2\,,
  \qquad D(\partial I_{\eps\lambda})=D(\partial (I_{\eps}^1+S_\eps)) \cap D(\partial I_{\eps\lambda}^2)\,.
  \]
  The thesis follows then directly from Lemma~\ref{subdiff1} and Lemma~\ref{subdiff2}.
\end{proof}

We have now all the tools in order to 
show existence of solutions to the approximated problem \eqref{app}
via minimization of the regularized functional $I_{\eps\lambda}$.
Namely, we have the following result.
\begin{prop}[Well-posedness of the approximated problem]
  \label{equiv_lam}
  For every $\lambda>0$, the functional $I_{\eps\lambda}$ admits a unique 
  global minimizer 
  \[
  z_{\eps\lambda}\in \mathcal I^{2,2}(H,H)\,.
  \]
  Moreover, the triplet $(z_{\eps\lambda}, \partial_tz_{\eps\lambda}^d, 
  (\partial_t z_{\eps\lambda}^d)^s)$ is a solution 
  of  the approximated problem \eqref{app}.
\end{prop}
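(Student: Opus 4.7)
The plan is to apply the direct method of the calculus of variations on the Hilbert space $\mathcal I^{2,2}(H,H)$ to obtain the unique minimizer $z_{\eps\lambda}$, and then to decode the Euler--Lagrange identity provided by Lemma~\ref{subdiff_lam} into the forward-backward system~\eqref{app}. The functional $I_{\eps\lambda}$ is proper, convex ($\Phi_\lambda$ being convex by standard Moreau--Yosida theory), and lower semicontinuous (continuity of $I_\eps^1$ and $I_{\eps\lambda}^2$, plus closedness of the affine constraint $\{z^d(0)=u_{0,\eps}\}$ via the continuous embedding $W^{1,2}(0,T;H)\embed C^0([0,T];H)$). The quadratic block yields coercivity: with the exponential weight bounded from below by $e^{-T/\eps}>0$, it controls $\|\partial_t z^d\|_{L^2(0,T;H)}$ and $\|z^s\|_{L^2(0,T;\cL^2(U,H))}$, and the former together with the fixed initial value $z^d(0)=u_{0,\eps}\in H$ controls $z^d$ in $W^{1,2}(0,T;H)$. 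Existence of a minimizer follows, and strict convexity of the quadratic block forces uniqueness.

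Set $u_{\eps\lambda}:=z_{\eps\lambda}$ and $v_{\eps\lambda}:=\partial_t z^d_{\eps\lambda}$. The condition $0\in\partial I_{\eps\lambda}(z_{\eps\lambda})$ combined with Lemma~\ref{subdiff_lam}, tested against $h\in\mathcal I^{2,2}_0(H,H)$ (which annihilates the orthogonal component $\tilde w$), reads
\[
  \E\!\int_0^T\!e^{-s/\eps}\!\left[\eps(v_{\eps\lambda},\partial_s h^d) + (A_\lambda(u_{\eps\lambda}),h) + ((u_{\eps\lambda}^s-B),h^s)_{\cL^2(U,H)}\right]\!\d s = 0.
\]
Selecting $h=h^s\cdot W$ and exploiting It\^o's isometry leads, via a martingale representation argument, to the identification $u_{\eps\lambda}^s=B+\eps G_{\eps\lambda}$ for some $G_{\eps\lambda}\in L^2_\cP(\Omega;L^2(0,T;\cL^2(U,H)))$; this is equivalent to the forward equation of~\eqref{app}. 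Selecting instead $h=h^d$ with $h^d(0)=0$ and $h^d(T)$ free suggests, through formal integration by parts in time, the backward equation $-\eps\,\d v_{\eps\lambda}+v_{\eps\lambda}\,\d t + A_\lambda(u_{\eps\lambda})\,\d t=-\eps G_{\eps\lambda}\,\d W$ together with the terminal condition $\eps v_{\eps\lambda}(T)=0$ arising from the free boundary term at $s=T$. The rigorous derivation proceeds through the martingale representation theorem applied to the $(\cF_s)$-martingale $s\mapsto\E\!\left[\int_0^T e^{-r/\eps} A_\lambda(u_{\eps\lambda}(r))\,\d r\mid\cF_s\right]$, which equips $v_{\eps\lambda}$ with an It\^o decomposition whose stochastic integrand is exactly $G_{\eps\lambda}=v_{\eps\lambda}^s$.

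The main obstacle lies in this decoding step: passing from the variational identity on $\mathcal I^{2,2}(H,H)$ to the pointwise forward-backward system. As emphasized in the introduction, $v_{\eps\lambda}$ is only progressively measurable a priori, and the non-causal character of \eqref{app} forces genuine martingale contributions to appear, ruling out a naive deterministic integration by parts. The resolution rests on martingale representation in infinite dimensions together with the It\^o formula for $\|\cdot\|_H^2$. A convenient consistency check is the converse direction---that any solution of \eqref{app} is a critical point of $I_{\eps\lambda}$, hence by convexity the unique minimizer---which follows by applying It\^o's formula to $s\mapsto e^{-s/\eps}\eps(v_{\eps\lambda}(s),h^d(s))_H$ against arbitrary test functions $h^d$ with $h^d(0)=0$.
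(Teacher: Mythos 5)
Your proposal is correct and follows essentially the same route as the paper: existence and uniqueness of the minimizer via convexity, coercivity, and reflexivity of $\mathcal I^{2,2}(H,H)$; then the Euler--Lagrange identity from Lemma~\ref{subdiff_lam} tested separately against processes of the form $\int_0^\cdot K\,\d s$ and $L\cdot W$, with the non-adapted term $\int_0^T e^{-r/\eps}A_\lambda(u_{\eps\lambda}(r))\,\d r$ handled by conditioning on $\cF_t$ and the infinite-dimensional martingale representation theorem, which is exactly how the paper produces $G_{\eps\lambda}$, the backward equation, and the terminal condition. The only cosmetic difference is the order of the two test choices (the paper first derives the backward equation and defines $G_{\eps\lambda}$, then identifies $z^s_{\eps\lambda}=B+\eps G_{\eps\lambda}$), which does not affect the substance.
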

\begin{proof}
  We note first that the functional $I_\eps^1+S_\eps$ is strictly convex
  and coercive on $\mathcal I^{2,2}(H,H)$, hence
  so is the functional $I_{\eps\lambda}$ since $\Phi_\lambda$ is
  convex and bounded from below.
  Since $\mathcal I^{2,2}(H,H)$ is reflexive, this ensures the existence and 
  uniqueness of a global minimizer $z_{\eps\lambda}\in\mathcal I^{2,2}(H,H)$ 
  for $I_{\eps\lambda}$. Clearly, we have that 
  $z_{\eps\lambda}\in D(I_{\eps\lambda})$, so that $z_{\eps\lambda}^d(0)=u_{0,\eps}$.
  Moreover, by definition of minimizer 
  we have that 
  \[
  0\in \partial I_{\eps\lambda}(z_{\eps\lambda})\,.
  \]
  By virtue of Lemma~\ref{subdiff_lam}, we deduce that 
  \begin{align}
   & \E\int_0^Te^{-t/\eps}
    \left[\eps(\partial_tz_{\eps\lambda}^d(t), \partial_t h^d(t))
    +(A_\lambda(t,z_{\eps\lambda}(t)), h(t)) ,
    h^s(t))_{\cL^2(U,H)}\right]\,\d t\nonumber\\
&\quad +\E\int_0^Te^{-t/\eps}
    ((z_{\eps\lambda}^s-B)(t), h^s(t))_{\cL^2(U,H)}\,\d t =0 \label{star}
  \end{align}
  for every $h\in\mathcal I^{2,2}(H,H)$ with $h(0)=0$. Now, 
  since $z_{\eps\lambda}\in L^2(\Omega; C^0([0,T]; H))$
  and $A_\lambda$ is uniformly Lipschitz-continuous in its third variable, 
  it is clear that
  \[
  \int_0^\cdot e^{-s/\eps}A_\lambda(s, z_{\eps\lambda}(s))\,\d s
  \in L^2(\Omega; C^1([0,T]; H))\,.
  \]
  Hence, by It\^o's formula we have, in differential (formal) form, that 
  \begin{align*}
  &\d\left(\int_0^t e^{-s/\eps}A_\lambda(s, z_{\eps\lambda}(s))\,\d s, h(t)\right)\\
  &=
  \left(e^{-t/\eps} A(t, z_{\eps\lambda}), h(t)\right)\,\d t
  +\left(\int_0^t e^{-s/\eps}A_\lambda(s, z_{\eps\lambda}(s))\,\d s, \partial_t h^d(t)\right)\,\d t\\
  &\qquad+\left(\int_0^t e^{-r/\eps}
  A_\lambda(r, z_{\eps\lambda}(r))\,\d r, h^s(t)\,\d W(t)\right)\,.
  \end{align*}
  Integrating on $[0,T]$ and taking expectations we infer that 
  \begin{align*}
  \E\left(\int_0^Te^{-s/\eps}A_\lambda(s, z_{\eps\lambda}(s))\,\d s, h(T)\right)&=
  \E\int_0^Te^{-t/\eps}(A(t, z_{\eps\lambda}(t)), h(t))\,\d t\\
  &+\E\int_0^T
  \left(\int_0^t e^{-s/\eps}A_\lambda(s, z_{\eps\lambda}(s))\,\d s, \partial_t h^d(t)\right)\,\d t\,.
  \end{align*}
  Noting that the first term on the right-hand side appears in
  \eqref{star} as well, 
  by substitution we infer then that
  \begin{align}
  \nonumber
  &\E\int_0^T\left(\eps e^{-t/\eps}\partial_t z_{\eps\lambda}^d(t)
  -\int_0^t e^{-s/\eps}A_\lambda(s, z_{\eps\lambda}(s))\,\d s,\partial_th^d(t)\right)\,\d t\\
  \nonumber
  &\quad+\E\int_0^Te^{-t/\eps}
  ((z_{\eps\lambda}^s-B)(t), h^s(t))_{\cL^2(U,H)}\,\d t
  \\
&\quad \label{var_eq}+
\E\left(\int_0^Te^{-s/\eps}A_\lambda(s, z_{\eps\lambda}(s))\,\d s, h(T)\right) =0
  \end{align} 
  for every $h\in\mathcal I^{2,2}(H,H)$ such that $h(0)=0$. Now, note that 
  for any such $h$, we have that 
  \[
  h(T)=h^d(T)+\int_0^Th^s(r)\,\d W(r) = \int_0^T\partial_t h^d(s)\,\d s
  +\int_0^Th^s(r)\,\d W(r) \,,
  \]
  which yields in turn that 
  \begin{align*}
  \E\left(\int_0^Te^{-s/\eps}A_\lambda(s, z_{\eps\lambda}(s))\,\d s, h(T)\right)
  &=\E\int_0^T\left(\int_0^Te^{-r/\eps}A_\lambda(r, z_{\eps\lambda}(r))\,\d r, 
  \partial_t h^d(s)\right)\,\d s\\
  &+\E\left(\int_0^Te^{-s/\eps}A_\lambda(s, z_{\eps\lambda}(s))\,\d s, \int_0^Th^s(r)\,\d W(r)\right)\,.
  \end{align*}
  Using this equality for the last term of
  \eqref{var_eq} we obtain that
  \begin{align}
  \nonumber
  &\E\int_0^T\left(\eps e^{-t/\eps}\partial_t z_{\eps\lambda}^d(t)
  -\int_0^t e^{-s/\eps}A_\lambda(s, z_{\eps\lambda}(s))\,\d r+
  \int_0^Te^{-s/\eps}A_\lambda(s, z_{\eps\lambda}(s))\,\d s
  ,\partial_th^d(t)\right)\,\d t\\
  \label{var_eq2}
  &\quad+\E\left(\int_0^Te^{-t/\eps}(z_{\eps\lambda}^s-B)(t)\,\d W(t)
  +\int_0^Te^{-s/\eps}A_\lambda(s, z_{\eps\lambda}(s))\,\d s,
  \int_0^T h^s(t)\,\d W(t)\right) =0
  \end{align} 
  for every $h\in\mathcal I^{2,2}(H,H)$ with $h^d(0)=0$. 
  Now, for any arbitrary $K\in L^2_{\cP}(\Omega; L^2(0,T; H))$, 
  note that the process
  \[
  h_K:=\int_0^\cdot K(s)\,\d s \in \mathcal I^{2,2}(H,H)
  \]
  satisfies $h_K(0)=0$, hence it is a possible test in
  equation \eqref{var_eq2}.
 Since $h_K^s=0$,
  we deduce that 
  \[
  \E\int_0^T\left(\eps e^{-t/\eps}\partial_t z_{\eps\lambda}^d(t)
  -\int_0^t e^{-s/\eps}A_\lambda(s, z_{\eps\lambda}(s))\,\d s+
  \int_0^Te^{-s/\eps}A_\lambda(s, z_{\eps\lambda}(s))\,\d s,K(t)\right)\,\d t=0
  \]
  for every $K\in L^2_{\cP}(\Omega; L^2(0,T; H))$. Let us stress that the first 
  component of the scalar product appearing in this equality is {\em not}
  progressively measurable, hence one cannot simply deduce that it vanishes 
  by arbitrariness of $K$. Nonetheless, note that by definition of conditional 
  expectation and by the adaptedness of $K$, we have 
  \begin{align*}
  &\E\int_0^T\left(
  \int_0^Te^{-s/\eps}A_\lambda(s, z_{\eps\lambda}(s))\,\d s,K(t)\right)\,\d t\\
  &\qquad=
  \E\int_0^T\left(\E\left[\int_0^Te^{-s/\eps}A_\lambda(s, z_{\eps\lambda}(s))\,\d s\,
  \bigg|\,\cF_t\right],K(t)\right)\,\d t\,.
  \end{align*}
  Since $(\cF_t)_{t\in[0,T]}$ is the filtration generated by $W$ and 
  \[
  \int_0^Te^{-s/\eps}A_\lambda(s, z_{\eps\lambda}(s))\,\d s \in L^2(\Omega,\cF_T; H)\,,
  \]
  the process
  \[ 
  t\mapsto \E\left[\int_0^Te^{-s/\eps}A_\lambda(s, z_{\eps\lambda}(s))\,\d s\,
  \bigg|\,\cF_t\right]
  \]
  is an $H$-valued continuous square-integrable martingale, and in
  particular is 
  progressively measurable. We deduce then that the variational equality 
  reads equivalently 
  \begin{align*}
  &\E\int_0^T\left(\eps e^{-t/\eps}\partial_t z_{\eps\lambda}^d(t)
  {-}\int_0^t e^{-s/\eps}A_\lambda(s, z_{\eps\lambda}(s))\,\d s{+}
  \E\left[\int_0^Te^{-s/\eps}A_\lambda(s, z_{\eps\lambda}(s))\,\d s\,
  \bigg|\cF_t\right],K(t)\right)\,\d t\\
  &=0 \qquad\forall\, K\in L^2_{\cP}(\Omega; L^2(0,T; H))\,.
  \end{align*}
  At this point, since the process appearing on the left term of the scalar product 
  belongs to the space $L^2_{\cP}(\Omega; L^2(0,T; H))$, by arbitrariness of $K$
  we have that 
  \[
  \eps e^{-t/\eps}\partial_t z_{\eps\lambda}^d(t)
  -\int_0^t e^{-s/\eps}A_\lambda(s, z_{\eps\lambda}(s))\,\d s+
  \E\left[\int_0^Te^{-s/\eps}A_\lambda(s, z_{\eps\lambda}(s))\,\d s\,
  \bigg|\,\cF_t\right] = 0
  \]
  almost everywhere in $\Omega\times[0,T]$. We deduce that there is a 
  $\d\P\otimes\d t$-version of 
  $\partial_t z^d_{\eps\lambda}$ (which will be denoted with the same symbol 
  for brevity of notation) such that 
  \beq\label{2nd}
  \partial_t z^d_{\eps\lambda} \in L^2(\Omega; C^0([0,T]; H))\,.
  \eeq
  Furthermore, by the classical martingale representation theorem in Hilbert spaces
  (see e.g.~\cite[Prop.~4.1]{fur-tess} and \cite{hu-peng}), there exists 
  a process $G_{\eps\lambda}\in L^2_\cP(\Omega; L^2(0,T;\cL^2(U,H)))$
  such that 
  \[
  \E\left[\int_0^Te^{-s/\eps}A_\lambda(s, z_{\eps\lambda}(s))\,\d s\,
  \bigg|\,\cF_t\right] = 
  \int_0^Te^{-s/\eps}A_\lambda(s, z_{\eps\lambda}(s))\,\d s +
  \eps\int_t^Te^{-s/\eps}G_{\eps\lambda}(s)\,\d W(s)
  \]
  for every $t\in[0,T]$, from which it follows that 
  \[
  \eps e^{-t/\eps}\partial_t z_{\eps\lambda}^d(t)
  +\int_t^T e^{-s/\eps}A_\lambda(s, z_{\eps\lambda}(s))\,\d s=
  -\eps\int_t^Te^{-s/\eps}G_{\eps\lambda}(s)\,\d W(s) \qquad\forall\,t\in[0,T]\,.
  \]
  It follows in particular that 
  \beq\label{3rd}
  \partial_t z_{\eps\lambda}^d \in \mathcal I^{2,2}(H,H)\,, \qquad \partial_t z^d_{\eps\lambda}(T)=0\,,
  \eeq
  and
  \begin{align*}
  \eps\d(\partial_t z^d_{\eps\lambda})&=
  \eps\d(e^{t/\eps}e^{-t/\eps}\partial_t z^d_{\eps\lambda})=
  e^{t/\eps}(e^{-t/\eps}\partial_t z^d_{\eps\lambda})\,\d t
  +\eps e^{t/\eps}\d(e^{-t/\eps}\partial_t z^d_{\eps\lambda})\\
  &=(\partial_t z^d_{\eps\lambda})\,\d t
  +A_\lambda(\cdot, z_{\eps\lambda})\,\d t
  +\eps G_{\eps\lambda}\,\d W\,,
  \end{align*}
  which reads, equivalently, 
  \beq\label{4th}
  -\eps\d(\partial_t z^d_{\eps\lambda}) + 
  (\partial_t z^d_{\eps\lambda})\,\d t
  +A_\lambda(\cdot, z_{\eps\lambda})\,\d t =
  -\eps G_{\eps\lambda}\,\d W\,.
  \eeq
  
  Now, we go back to the variational formulation \eqref{var_eq2}
  and take $h_L:=L\cdot W\in\mathcal I^{2,2}(H,H)$ as test process
  for any arbitrary $L\in L^2_\cP(\Omega; L^2(0,T; \cL^2(U,H)))$. 
  Clearly, the process $h_L$ satisfies
  $h_L(0)=0$ and is hence a possible test  in \eqref{var_eq2}. Since $\partial_t h^d_L=0$,
  we infer that 
  \[
  \E\left(\int_0^Te^{-t/\eps}(z_{\eps\lambda}^s-B)(t)\,\d W(t)
  +\int_0^Te^{-s/\eps}A_\lambda(s, z_{\eps\lambda}(s))\,\d s,
  (L\cdot W)(T)\right) =0
  \]
  for every $L\in L^2_\cP(\Omega; L^2(0,T; \cL^2(U,H)))$. Now, note that 
  by arbitrariness of $L$, by the usual martingale representation 
  theorems, the random variable $L\cdot W(T)$ is arbitrary in 
  the subspace $L^2_0(\Omega,\cF_T; H)$ of elements with null mean.
  It follows in particular that the precess on the left-term is constant and equal to 
  its mean, i.e.
  \begin{align*}
  &\int_0^Te^{-t/\eps}(z_{\eps\lambda}^s-B)(t)\,\d W(t)
  +\int_0^Te^{-s/\eps}A_\lambda(s, z_{\eps\lambda}(s))\,\d s\\
  &\qquad=
  \E\left[\int_0^Te^{-t/\eps}(z_{\eps\lambda}^s-B)(t)\,\d W(t)
  +\int_0^Te^{-s/\eps}A_\lambda(s, z_{\eps\lambda}(s))\,\d s\right]\\
  &\qquad=
  \E\left[\int_0^Te^{-s/\eps}A_\lambda(s, z_{\eps\lambda}(s))\,\d s\right]\,.
  \end{align*}
  Recalling the definition of $G_{\eps\lambda}$, we have that 
  \[
  \E\left[\int_0^Te^{-s/\eps}A_\lambda(s, z_{\eps\lambda}(s))\,\d s\right] = 
  \int_0^Te^{-s/\eps}A_\lambda(s, z_{\eps\lambda}(s))\,\d s +
  \eps\int_0^Te^{-s/\eps}G_{\eps\lambda}(s)\,\d W(s)\,,
  \]
  so that by comparison we obtain that 
  \[
  \int_0^Te^{-t/\eps}(z_{\eps\lambda}^s-B)(t)\,\d W(t)=
  \eps\int_0^Te^{-s/\eps} G_{\eps\lambda}(s)\,\d W(s) \qquad\P\text{-a.s.}\,,
  \]
  yielding 
  \beq
    \label{1st}
    z^s_{\eps\lambda}=B + \eps G_{\eps\lambda}\,.
  \eeq
   
  It is then clear now from \eqref{2nd}, \eqref{3rd}, \eqref{4th}, and \eqref{1st},
  that $(z_{\eps\lambda}, \partial_tz^d_{\eps\lambda}, G_{\eps\lambda})$
  is a solution to the approximated problem \eqref{app}.
\end{proof}

\subsection{Uniform estimates}
We want to pass now to the limit as $\lambda\searrow0$ in \eqref{app}.
To this end, let us show some uniform estimates in $\lambda$, still with $\eps>0$ fixed.

It\^o's formula for the square of the $H$-norm yields 

\begin{align}
  \nonumber
  \frac12\E\norm{u_{\eps\lambda}(T)}_H^2 &= 
  \frac12\E\norm{u_{0,\eps}}_H^2 + 
  \E\int_0^T\left(v_{\eps\lambda}(s), u_{\eps\lambda}(s)\right)\,\d s
  +\frac12\E\int_0^T\norm{B(s)}^2_{\cL^2(U,H)}\,\d s\\
  \label{ito_aux}
  &+\frac{\eps^2}2\E\int_0^T\norm{G_{\eps\lambda}(s)}^2_{\cL^2(U,H)}\,\d s
  +\eps\E\int_0^T\left(B(s), G_{\eps\lambda}(s)\right)_{\cL^2(U;H)}\,\d s\,.
\end{align}

Note now that 
\[
  \d(\eps v_{\eps\lambda}, u_{\eps\lambda}) = 
  \eps v_{\eps\lambda}\,\d u_{\eps\lambda} + 
  u_{\eps\lambda}\eps\d v_{\eps\lambda} +
  \eps\d[G_{\eps\lambda}, B+\eps G_{\eps\lambda}]\,,
\]
which yields, taking \eqref{app} into account, 
\begin{align*}
  \d(\eps v_{\eps\lambda}, u_{\eps\lambda}) &=
  \eps\norm{v_{\eps\lambda}}_H^2\,\d t + 
  \left(\eps v_{\eps\lambda}, (B+\eps G_{\eps\lambda})\,\d W\right)
  +(v_{\eps\lambda}, u_{\eps\lambda})\,\d t 
  +\left(A_\lambda(u_{\eps\lambda}), u_{\eps\lambda}\right)\,\d t \\
  &+\eps\left(u_{\eps\lambda}, G_{\eps\lambda}\,\d W\right)
  +\eps\left(B,G_{\eps\lambda}\right)_{\cL^2(U,H)}\,\d t
  +\eps^2\norm{G_{\eps\lambda}}^2_{\cL^2(U,H)}\,\d t\,.
\end{align*}
Recalling that $\eps v_\eps(T)=0$, we deduce then that 

\begin{align}
  \nonumber
  &\eps\E\int_0^T\norm{v_{\eps\lambda}(s)}_H^2 \,\d s+ 
  \E\int_0^T(v_{\eps\lambda}(s), u_{\eps\lambda}(s))\,\d s
  +\E\int_0^T(A_\lambda(u_{\eps\lambda}(s)), u_{\eps\lambda}(s))\,\d s\\
  \label{ito_aux2}
  &+\eps^2\E\int_0^T\norm{G_{\eps\lambda}(s)}^2_{\cL^2(U,H)}\,\d s +
  \eps\E\int_0^T\left(B(s), G_{\eps\lambda}(s)\right)_{\cL^2(U;H)}\d s = 
  -\eps\E(v_{\eps\lambda}(0), u_{0,\eps})\,.
\end{align}
Now, noting that
$I - J_\lambda = \lambda A_\lambda$, 
recalling that $A_\lambda(\cdot)\in A(J_\lambda(\cdot))$,
the coercivity of $A$ entails
\begin{align*}
  (A_\lambda(u_{\eps\lambda}), u_{\eps\lambda})&=
  (A_\lambda(u_\lambda), J_\lambda(u_{\eps\lambda}))+
  (A_\lambda(u_{\eps\lambda}), u_{\eps\lambda} - J_\lambda(u_{\eps\lambda}))\\
  &=(A_\lambda(u_\lambda), J_\lambda(u_{\eps\lambda}))+
  \lambda\norm{A_\lambda(u_{\eps\lambda})}_H^2\\
  &\geq c_A\norm{J_\lambda(u_{\eps\lambda})}_V^p + \lambda\norm{A_\lambda(u_{\eps\lambda})}_H^2\,.
\end{align*}
Hence, by comparing \eqref{ito_aux} and \eqref{ito_aux2} we obtain 
\begin{align}
  \nonumber
  &\frac12\E\norm{u_{\eps\lambda}(T)}_H^2 +
  \eps\E\int_0^T\norm{v_{\eps\lambda}(s)}_H^2\,\d s+
  c_A\E\int_0^T\norm{J_\lambda(u_{\eps\lambda}(s))}_V^p\,\d s\\
  \nonumber
  &\qquad+\lambda\E\int_0^T\norm{A_\lambda(u_{\eps\lambda}(s))}_H^2\,\d s 
  +\frac{\eps^2}2\E\int_0^T\norm{G_{\eps\lambda}(s)}^2_{\cL^2(U,H)}\,\d s \\
  \label{ito_aux3}
  &\leq \frac12\E\norm{u_{0,\eps}}_H^2 
  - \eps\E(v_{\eps\lambda}(0), u_{0,\eps})
  + \frac12\E\int_0^T\norm{B(s)}^2_{\cL^2(U;H)}\,\d s\,.
\end{align}

Next, denoting by $R_0:V_0\to V^*_0$ the duality mapping, 
It\^o's formula for the square of the $V^*_0$-norm of $v_{\eps\lambda}$
yields, by \eqref{app},
\begin{align*}
  \frac\eps2\norm{v_{\eps\lambda}(t)}_{V_0^*}^2 &+ 
  \int_{t}^T\norm{v_{\eps\lambda}(s)}_{V_0^*}^2\,\d s 
  +\frac{\eps}2\int_t^T\norm{G_{\eps\lambda}(s)}_{\cL^2(U,V_0^*)}^2\,\d s\\
  &=-\int_t^T\left(A_\lambda(u_{\eps\lambda}(s)), R_0^{-1}(v_{\eps\lambda}(s))\right)\,\d s 
  -\eps\int_t^T\left(R_0^{-1}(v_{\eps\lambda}(s), G_{\eps\lambda}(s)\,\d W(s)\right)
\end{align*}
for every $t\in[0,T]$, $\P$-almost surely.
We would like to write It\^o's formula 
for the $q$-power of the $V_0^*$-norm of $v_{\eps\lambda}$.
Clearly, if $p=2$ then also $q=2$ and nothing has to be done.
If $p>2$ then we have $q\in(1,2)$ and 
this can be achieved by writing It\^o's formula for the 
real function $|\cdot|^{q/2}$.
However, since $q\in(1,2)$ the function 
$|\cdot|^{q/2}$ is not of class $C^2$, and this cannot be done straightaway.
We need then to rely on a suitable approximation of the function $|\cdot|^{q/2}$.
Let us introduce to this end the approximations
\[
  \gamma_\delta:[0,+\infty)\to\erre\,, \qquad \gamma_\delta(r):=(r^2+\delta^2)^{q/4}\,, \quad r\geq0\,, \quad\delta>0\,.
\]
Clearly, we have that $\gamma_\delta\in C^\infty([0,+\infty))$, with 
\[
  \gamma_\delta'(r)=
  \begin{cases}
  \displaystyle\frac{q}2(r^2+\delta^2)^{\frac{q-4}{4}}r \quad&\text{if } r>0\,,\\[2mm]
  0 \quad&\text{if } r=0\,,
  \end{cases}
\]
and
\[
  \gamma_\delta''(r)=
  \begin{cases}
 \displaystyle \frac{q}{2}(r^2+\delta^2)^{\frac{q-4}{4}} + \frac{q(q-4)}4(r^2+\delta^2)^{\frac{q-8}{4}}r^2 \quad&\text{if } r>0\,,\\[2mm]
  0 \quad&\text{if } r=0\,.
  \end{cases}
\]
Consequently, for every $r\geq0$ it holds that
\[
  \lim_{\delta\searrow0}\gamma_\delta(r)=r^{q/2}\,, \qquad
  \lim_{\delta\searrow0}\gamma_\delta'(r)=\frac{q}2r^{q/2-1}1_{\{r>0\}}\,, \qquad
  \lim_{\delta\searrow0}\gamma_\delta''(r)=\frac{q}{2}\frac{q-2}{2}r^{q/2 - 2}1_{\{r>0\}}\,.
\]
Since $\gamma_\delta$ is of class $C^2$, we can use the classical finite dimensional It\^o's 
formula (see e.e.~\cite{dapratozab}) and infer that 
\begin{align*}
  &\gamma_\delta\left(\frac\eps2\norm{v_{\eps\lambda}(t)}_{V_0^*}^2\right) + 
  \int_t^T\gamma_\delta'\left(\frac\eps2\norm{v_{\eps\lambda}(s)}_{V_0^*}^2\right)
  \left(\norm{v_{\eps\lambda}(s)}_{V_0^*}^2+
  \frac\eps2\norm{G_{\eps\lambda}(s)}_{\cL^2(U,V_0^*)}^2\right)\,\d s\\
  &\qquad+\frac{\eps^2}2\int_t^T
  \gamma_\delta''\left(\frac\eps2\norm{v_{\eps\lambda}(s)}_{V_0^*}^2\right)
  \norm{(R_0^{-1}v_{\eps\lambda}(s), G_{\eps\lambda}(s))}_{\cL^2(U,\erre)}^2\,\d s\\
  &=-\int_t^T\gamma_\delta'\left(\frac\eps2\norm{v_{\eps\lambda}(s)}_{V_0^*}^2\right)
  \left(A_\lambda(u_{\eps\lambda}(s)), R_0^{-1}(v_{\eps\lambda}(s))\right)\,\d s \\
  &\qquad-\eps\int_t^T\gamma_\delta'\left(\frac\eps2\norm{v_{\eps\lambda}(s)}_{V_0^*}^2\right)
  \left(R_0^{-1}(v_{\eps\lambda}(s), G_{\eps\lambda}(s)\,\d W(s)\right)\,.
\end{align*}
Now, letting $\delta\searrow0$ it follows by the Dominated Convergence Theorem that 
\begin{align*}
  &\left(\frac{\eps}{2}\right)^{{q}/2}\norm{v_{\eps\lambda}(t)}_{V_0^*}^q + 
  \frac{q}2\left(\frac{\eps}{2}\right)^{{q}/2-1}
  \int_t^T\norm{v_{\eps\lambda}(s)}_{V_0^*}^q\,\d s\\
  &\qquad+\frac{q}2\left(\frac{\eps}{2}\right)^{{q}/2-1}\int_t^T
  \frac\eps21_{\{\norm{v_{\eps\lambda}(s)}_{V_0^*}>0\}}
  \norm{v_{\eps\lambda}(s)}_{V_0^*}^{q-2}
  \norm{G_{\eps\lambda}(s)}_{\cL^2(U,V_0^*)}^2 \,\d s\\
  &\qquad+\frac{q}2\frac{q-2}2\frac{\eps^2}2\left(\frac{\eps}2\right)^{{q}/2-2}
  \int_t^T1_{\{\norm{v_{\eps\lambda}(s)}_{V_0^*}>0\}}
  \norm{v_{\eps\lambda}(s)}_{V^*}^{q-4}
  \norm{(R_0^{-1}v_{\eps\lambda}(s), G_{\eps\lambda}(s))}_{\cL^2(U,\erre)}^2\,\d s\\
  &=-\frac{q}2\left(\frac{\eps}{2}\right)^{{q}/2-1}
  \int_t^T1_{\{\norm{v_{\eps\lambda}(s)}_{V_0^*}>0\}}
  \norm{v_{\eps\lambda}(s)}_{V_0^*}^{q-2}
  \left(A_\lambda(u_{\eps\lambda}(s)), R_0^{-1}(v_{\eps\lambda}(s))\right)\,\d s \\
  &\qquad-\eps\frac{q}2\left(\frac{\eps}{2}\right)^{{q}/2-1}
  \int_t^T1_{\{\norm{v_{\eps\lambda}(s)}_{V_0^*}>0\}}
  \norm{v_{\eps\lambda}(s)}_{V_0^*}^{q-2}
  \left(R_0^{-1}(v_{\eps\lambda}(s), G_{\eps\lambda}(s)\,\d W(s)\right)\,.
\end{align*}
Multiplying by $(\frac\eps2)^{1-\frac q2}$, taking expectations,
and using the Young inequality yields
\begin{align*}
  &\frac{\eps}{2}\E\norm{v_{\eps\lambda}(t)}_{V_0^*}^q + 
  \frac{q}2\E
  \int_t^T\left(\norm{v_{\eps\lambda}(s)}_{V_0^*}^q +
  \frac\eps21_{\{\norm{v_{\eps\lambda}(s)}_{V_0^*}>0\}}
  \norm{v_{\eps\lambda}(s)}_{V_0^*}^{q-2}
  \norm{G_{\eps\lambda}(s)}_{\cL^2(U,V_0^*)}^2 \right)\,\d s\\
  &\qquad+\frac{q(q-2)}4\eps\E
  \int_t^T1_{\{\norm{v_{\eps\lambda}(s)}_{V_0^*}>0\}}
  \norm{v_{\eps\lambda}(s)}_{V_0^*}^{q-4}
  \norm{(R_0^{-1}v_{\eps\lambda}(s), G_{\eps\lambda}(s))}_{\cL^2(U,\erre)}^2\,\d s\\
  &\leq\frac{q}2\E
  \int_t^T\norm{R_0^{-1}v_{\eps\lambda}(s)}_{V}^{q-1}
  \norm{A_\lambda(u_{\eps\lambda}(s))}_{V^*}\,\d s \\
  &\leq\frac{c_0q}{2}\E
  \int_t^T\norm{R_0^{-1}v_{\eps\lambda}(s)}_{V_0}^{q-1}
  \norm{A_\lambda(u_{\eps\lambda}(s))}_{V^*}\,\d s \\
  &\leq\frac q{2p}\E\int_{t}^T\norm{v_{\eps\lambda}(s)}_{V_0^*}^q\,\d s 
  +\frac{c_0^q}2\E\int_t^T\norm{A_\lambda(u_{\eps\lambda}(s))}_{V^*}^q\,\d s\,,
\end{align*}
where $c_0$ denotes the norm of the continuous inclusion $V_0\embed V$.
Since $$\frac q2-\frac q{2p}=\frac q2\left(1-\frac1p\right)=\frac12,$$ by 
rearranging the terms and using the 
boundedness of $A$
we deduce that 
\begin{align}
  \nonumber
  &\frac\eps2\E\norm{v_{\eps\lambda}(t)}_{V_0^*}^q + 
  \frac 12\E\int_{t}^T\norm{v_{\eps\lambda}(s)}_{V_0^*}^q\,\d s \\
  \nonumber
  &\qquad+\eps\frac q4\E\int_t^T1_{\{\norm{v_{\eps\lambda}(s)}_{V_0^*}>0\}}
  \norm{v_{\eps\lambda}(s)}_{V_0^*}^{q-2}
  \norm{G_{\eps\lambda}(s)}_{\cL^2(U,V_0^*)}^2\,\d s\\
  \label{ineq_aux}
  &\leq\frac{c_0^q}{2}\norm{f}_{L^1(\Omega\times(0,T))} + 
  \frac{C_A c_0^q}2\E\int_t^T\norm{J_\lambda(u_{\eps\lambda}(s))}_{V}^p\,\d s
\end{align}
for every $t\in[0,T]$, $\P$-almost surely. 
Now, since $0<q/2<1$, its conjugate exponent 
$-q/(2-q)$ is negative: 
the reverse Young's inequality implies then that 
\begin{align*}
  &\E\int_t^T1_{\{\norm{v_{\eps\lambda}(s)}_{V_0^*}>0\}}
  \norm{v_{\eps\lambda}(s)}_{V_0^*}^{q-2}
  \norm{G_{\eps\lambda}(s)}_{\cL^2(U,V_0^*)}^2\,\d s\\
  &\geq\frac2q\E\int_t^T\norm{G_{\eps\lambda}(s)}_{\cL^2(U,V_0^*)}^q\,\d s
  -\frac{q}{2-q}\E\int_t^T\norm{v_{\eps\lambda}(s)}_{V_0^*}^q\,\d s
\end{align*}
Taking this information into account we deduce from
\eqref{ineq_aux} that
\begin{align*}
  &\frac\eps2\E\norm{v_{\eps\lambda}(t)}_{V_0^*}^q + 
  \frac 12\E\int_{t}^T\norm{v_{\eps\lambda}(s)}_{V_0^*}^q\,\d s 
  +\frac\eps2\E\int_t^T\norm{G_{\eps\lambda}(s)}_{\cL^2(U,V_0^*)}^q\,\d s\\
  &\leq\frac{c_0^q}{2}\norm{f}_{L^1(\Omega\times(0,T))} + 
  \frac{C_A c_0^q}2\E\int_0^T\norm{J_\lambda(u_{\eps\lambda}(s))}_{V}^p\,\d s
  +\frac{q^2}{4(2-q)}\eps\E\int_t^T\norm{v_{\eps\lambda}(s)}_{V_0^*}^q\,\d s\,,
\end{align*}
yielding, by the Gronwall lemma,
\begin{align}
\nonumber
  &\frac\eps2\sup_{t\in[0,T]}\E\norm{v_{\eps\lambda}(t)}_{V_0^*}^q + 
  \frac12\E\int_0^T\norm{v_{\eps\lambda}(s)}_{V_0^*}^q\,\d s 
  +\frac\eps2\E\int_0^T\norm{G_{\eps\lambda}(s)}_{\cL^2(U,V_0^*)}^q\,\d s\\
  \label{ito_aux4}
  &\leq e^{\frac{Tq^2}{2(2-q)}}\left(\frac{c_0^q}{2}\norm{f}_{L^1(\Omega\times(0,T))} + 
  \frac{C_Ac_0^q}2\E\int_0^T\norm{J_\lambda(u_{\eps\lambda}(s))}_{V}^p\,\d s\right)\,.
\end{align}

Now, by multiplying the inequality \eqref{ito_aux4} by 
$e^{-\frac{Tq^2}{2(2-q)}}\frac{c_A}{C_Ac_0^q}$ and
summing it with inequality \eqref{ito_aux3}, the 
last term on the right-hand side of \eqref{ito_aux4} can be incorporated into 
the corresponding term on the left-hand side of \eqref{ito_aux3}:
rearranging the terms, we obtain
\begin{align*}
  &\frac12\E\norm{u_{\eps\lambda}(T)}_H^2\\ 
  &\;+
  \E\int_0^T\left(\eps\norm{v_{\eps\lambda}(s)}_H^2\,\d s+
  \frac{c_A}2\norm{J_\lambda(u_{\eps\lambda}(s))}_V^p
  +\lambda\norm{A_\lambda(u_{\eps\lambda}(s))}_H^2
  +\frac{\eps^2}2\norm{G_{\eps\lambda}(s)}^2_{\cL^2(U,H)}
  \right)\,\d s \\
  &\;+\frac{e^{-\frac{Tq^2}{2(2-q)}}c_A}{2C_Ac_0^q}
  \left(\eps\sup_{t\in[0,T]}\E\norm{v_{\eps\lambda}(t)}_{V_0^*}^q + 
  \E\int_{0}^T\norm{v_{\eps\lambda}(s)}_{V_0^*}^q\,\d s 
  +\eps\E\int_0^T\norm{G_{\eps\lambda}(s)}_{\cL^2(U,V_0^*)}^q\,\d s\right)\\
  &\leq \frac12\E\norm{u_{0,\eps}}_H^2 
  - \eps\E(v_{\eps\lambda}(0), u_{0,\eps})
  + \frac12\E\int_0^T\norm{B(s)}^2_{\cL^2(U;H)}\,\d s
  +\frac{c_A}{2C_A}\norm{f}_{L^1(\Omega\times(0,T))}\,.
\end{align*}
At this point, note the second term 
on the right-hand side above
can be handled using the averaged Young inequality: indeed, we infer that, for every $\sigma>0$,

\begin{align*}
  -\eps\E(v_{\eps\lambda}(0), u_{0,\eps}) 
  &\leq\eps\E\left[\norm{v_{\eps\lambda}(0)}_{V_0^*}\norm{u_{0,\eps}}_{V_0}\right]\\
  &\leq\frac{\sigma^q}{q}\eps\sup_{t\in[0,T]}
  \norm{v_{\eps\lambda}(t)}_{L^q(\Omega,\cF_0; V_0^*)}^q
  +\frac\eps{\sigma^pp}\norm{u_{0,\eps}}_{L^p(\Omega,\cF_0; V_0)}^p \,.
\end{align*}

Choosing and fixing $\sigma$ sufficiently small, independent of $\lambda$ and $\eps$, 
for example
\[
  \sigma:=\left(e^{-\frac{Tq^2}{2(2-q)}}\frac{qc_A}{4C_Ac_0^q}\right)^{1/q}\,,
\]
rearranging the terms we deduce that 
there exists a positive constant 
$M=M(c_A,C_A, c_0, q, T)$, independent of both $\lambda$ and $\eps$, such that
\begin{align*}
  &\E\norm{u_{\eps\lambda}(T)}_H^2 +
  \eps\E\int_0^T\norm{v_{\eps\lambda}(s)}_H^2\,\d s+
  \E\int_0^T\norm{J_\lambda(u_{\eps\lambda}(s))}_V^p\,\d s\\
  &\qquad
  +\lambda\E\int_0^T\norm{A_\lambda(u_{\eps\lambda}(s))}_H^2\,\d s 
  +\eps^2\E\int_0^T\norm{G_{\eps\lambda}(s)}^2_{\cL^2(U,H)}\,\d s\\
  &\qquad+\eps\sup_{t\in[0,T]}\E\norm{v_{\eps\lambda}(t)}_{V_0^*}^q + 
  \E\int_{0}^T\norm{v_{\eps\lambda}(s)}_{V_0^*}^q\,\d s 
  +\eps\E\int_0^T\norm{G_{\eps\lambda}(s)}_{\cL^2(U,V_0^*)}^q\,\d s\\
  &\leq M\E\left(\norm{u_{0,\eps}}_{H}^2 
  + \eps\norm{u_{0,\eps}}_{V_0}^p
  + \norm{B}^2_{L^2(0,T;\cL^2(U;H))}\right)\,.
\end{align*}
At this point, note that by the assumption \eqref{ip_init1}
on $(u_{0,\eps})_\eps$, we have that 
the right-hand side is uniformly bounded in $\eps$ and $\lambda$.

Then, we deduce that, by updating the value of the
constant $M$ (here below and the following possibly changing from
line to line),
\begin{align}
  \label{est1}
  \eps\norm{v_{\eps\lambda}}^2_{L^2(\Omega; L^2(0,T; H))} +
  \norm{J_\lambda(u_{\eps\lambda})}_{L^p(\Omega; L^p(0,T; V))}^p +
  \lambda\norm{A_\lambda(u_{\eps\lambda})}_{L^2(\Omega; L^2(0,T; H))}^2
 & \leq M\,,\\
 \label{est2}
 \eps\norm{v_{\eps\lambda}}^q_{C^0([0,T]; L^q(\Omega; V_0^*))} + 
 \norm{v_{\eps\lambda}}^q_{L^q(\Omega; L^q(0,T; V_0^*))} 
 &\leq M\,,\\
 \label{est2'}
  \eps^2\norm{G_{\eps\lambda}}^2_{L^2(\Omega; L^2(0,T; \cL^2(U,H)))}+
 \eps\norm{G_{\eps\lambda}}^q_{L^q(\Omega; L^q(0,T; \cL^2(U,V_0^*)))}
 &\leq M\,.
\end{align}
In particular, since $(v_{\eps\lambda})$ is uniformly bounded in 
$L^q_{\cP}(\Omega; L^q(0,T; V_0^*))$ by \eqref{est2} and
$(B+\eps G_{\eps\lambda})$ 
is uniformly bounded in $L^2_{\cP}(\Omega; L^2(0,T; \cL^2(U,H)))$
by \eqref{est2'}, 
it follows from the definition of $u_{\eps\lambda}$ itself in \eqref{app} that 
\beq
\label{est3}
  \norm{u_{\eps\lambda}}^q_{L^q(\Omega; C^0([0,T]; V_0^*))}\leq M\,.
\eeq
The boundedness of the operator $A$ yields also 
\beq
  \label{est4}
  \norm{A_\lambda(u_{\eps\lambda})}^q_{L^q(\Omega; L^q(0,T; V^*))}\leq M\,.
\eeq

Furthermore, following a classical argument employed in
backward SPDEs, we can refine the estimate 
on $(v_{\eps\lambda})$.
Indeed, let us recall the already obtained It\^o's formula for 
$v_{\eps\lambda}$ in $V_0^*$, which reads
\begin{align*}
  &\frac{\eps}{2}\norm{v_{\eps\lambda}(t)}_{V_0^*}^q + 
  \frac{q}2
  \int_t^T\left(\norm{v_{\eps\lambda}(s)}_{V_0^*}^q +
  \frac\eps21_{\{\norm{v_{\eps\lambda}(s)}_{V_0^*}>0\}}
  \norm{v_{\eps\lambda}(s)}_{V_0^*}^{q-2}
  \norm{G_{\eps\lambda}(s)}_{\cL^2(U,V_0^*)}^2 \right)\,\d s\\
  &\qquad+\frac{q}2\frac{q-2}2\eps
  \int_t^T1_{\{\norm{v_{\eps\lambda}(s)}_{V_0^*}>0\}}
  \norm{v_{\eps\lambda}(s)}_{V_0^*}^{q-4}
  \norm{(R_0^{-1}v_{\eps\lambda}(s), G_{\eps\lambda}(s))}_{\cL^2(U,\erre)}^2\,\d s\\
  &=-\frac{q}2
  \int_t^T1_{\{\norm{v_{\eps\lambda}(s)}_{V_0^*}>0\}}
  \norm{v_{\eps\lambda}(s)}_{V_0^*}^{q-2}
  \left(A_\lambda(u_{\eps\lambda}(s)), R_0^{-1}(v_{\eps\lambda}(s))\right)\,\d s \\
  &\qquad-\frac{q}2\eps
  \int_t^T1_{\{\norm{v_{\eps\lambda}(s)}_{V_0^*}>0\}}
  \norm{v_{\eps\lambda}(s)}_{V_0^*}^{q-2}
  \left(R_0^{-1}(v_{\eps\lambda}(s), G_{\eps\lambda}(s)\,\d W(s)\right)\,.
\end{align*}
Instead of taking expectations at $t$ fixed, we can now 
take supremum in time and then expectations.
The first term on the right-hand side can be easily bounded 
using the H\"older inequality and the estimates \eqref{est2} and \eqref{est4} as
\begin{align*}
  &\E\int_0^T1_{\{\norm{v_{\eps\lambda}(s)}_{V_0^*}>0\}}
  \norm{v_{\eps\lambda}(s)}_{V_0^*}^{q-2}
  \left(A_\lambda(u_{\eps\lambda}(s)), R_0^{-1}(v_{\eps\lambda}(s))\right)\,\d s\\
  &\leq c_0\E\int_0^T\norm{v_{\eps\lambda}(s)}_{V_0^*}^{q-1}
  \norm{A_\lambda(u_{\eps\lambda}(s))}_{V^*}\,\d s\\
  &\leq c_0
  \norm{v_{\eps\lambda}}_{L^q(\Omega; L^q(0,T; V_0^*))}^{q-1}
  \norm{A_\lambda(u_{\eps\lambda})}_{L^q(\Omega; L^q(0,T; V^*))}\leq M\,.
\end{align*}
The second term on the right-hand side can be bounded, 
thanks to Burkholder-Davis-Gundy and Young inequalities, as
\begin{align*}
  &\E\sup_{t\in[0,T]}\left|\int_t^T1_{\{\norm{v_{\eps\lambda}(s)}_{V_0^*}>0\}}
  \norm{v_{\eps\lambda}(s)}_{V_0^*}^{q-2}
  \left(R_0^{-1}(v_{\eps\lambda}(s), G_{\eps\lambda}(s)\,\d W(s)\right)\right|\\
  & \leq M \E\left(\int_0^T
  1_{\{\norm{v_{\eps\lambda}(s)}_{V_0^*}>0\}}
  \norm{v_{\eps\lambda}(s)}_{V_0^*}^{2(q-2)}
  \norm{\left(R_0^{-1}(v_{\eps\lambda}(s), G_{\eps\lambda}(s)\right)}_{\cL^2(U,\erre)}^2\,\d s 
  \right)^{1/2}\\
  &\leq M \E\left[\norm{v_{\eps\lambda}}_{C^0([0,T]; V_0^*)}^{q/2}
  \!\left(\int_0^T
  1_{\{\norm{v_{\eps\lambda}(s)}_{V_0^*}>0\}}
  \norm{v_{\eps\lambda}(s)}_{V_0^*}^{q-4}
  \norm{\left(R_0^{-1}(v_{\eps\lambda}(s), 
  G_{\eps\lambda}(s)\right)}_{\cL^2(U,\erre)}^2\,\d s\right)^{1/2}\right]\\
  &\leq\sigma\E\norm{v_{\eps\lambda}}^q_{C^0([0,T]; V_0^*)}\\
  &\qquad+\frac{M^2}{4\sigma}\E\int_0^T
  1_{\{\norm{v_{\eps\lambda}(s)}_{V_0^*}>0\}}
  \norm{v_{\eps\lambda}(s)}_{V_0^*}^{q-4}
  \norm{\left(R_0^{-1}(v_{\eps\lambda}(s)), G_{\eps\lambda}(s)\right)}_{\cL^2(U,\erre)}^2\,\d s
\end{align*}
for every $\sigma>0$ (independent of $\lambda$ and $\eps$). Hence, 
choosing $\sigma$ sufficiently small (for example $\sigma:=q/2$), rearranging the terms,
and using the H\"older inequality yields  
\[
  \eps\E\norm{v_{\eps\lambda}}^q_{C^0([0,T]; V_0^*)}\leq 
  M\left(1+\eps\E\int_0^T
  1_{\{\norm{v_{\eps\lambda}(s)}_{V_0^*}>0\}}
  \norm{v_{\eps\lambda}(s)}_{V_0^*}^{q-2}\norm{G_{\eps\lambda}(s)}_{\cL^2(U,V_0^*)}^2\,\d s \right)\,.
\]
Now, note that 
the right-hand side is uniformly bounded in $\lambda$ and $\eps$ thanks to 
the inequality \eqref{ineq_aux} and the already proved estimate
\eqref{est1}. Consequently, we deduce that 
\beq
  \label{est5}
  \eps\norm{v_{\eps\lambda}}^q_{L^q(\Omega; C^0([0,T]; V_0^*))}\leq M\,.
\eeq
Moreover, from inequality \eqref{ineq_aux}, 
since the function $r\mapsto|r|^{q-2}$, $r>0$, is decreasing, 
using again the reverse Young inequality and the estimate \eqref{est1}
we deduce that 
\begin{align*}
  M&\geq \eps\E\int_0^T
  1_{\{\norm{v_{\eps\lambda}(s)}_{V_0^*}>0\}}
  \norm{v_{\eps\lambda}(s)}_{V_0^*}^{q-2}\norm{G_{\eps\lambda}(s)}_{\cL^2(U,V_0^*)}^2\,\d s\\
  &\geq\eps\E\left[\norm{1_{\{\norm{v_{\eps\lambda}}_{V_0^*}>0\}}
  v_{\eps\lambda}}_{C^0([0,T]; V_0^*)}^{q-2}
  \norm{G_{\eps\lambda}}_{L^2(0,T; \cL^2(U,V_0^*))}^2\right]\\
  &\geq \frac2{q}\eps\E\norm{G_{\eps\lambda}}_{L^2(0,T; \cL^2(U,V_0^*))}^q
  -\frac{q}{2-q}\eps\E\norm{v_{\eps\lambda}}^q_{C^0([0,T]; V_0^*)}\,.
\end{align*}
Hence, estimate \eqref{est5} readily implies also 
\beq
  \label{est6}
  \eps\norm{G_{\eps\lambda}}^q_{L^q(\Omega; L^2(0,T; \cL^2(U,V_0^*)))}\leq M\,.
\eeq

\subsection{Passage to the limit as $\lambda\searrow0$}
We pass now to the limit as $\lambda\searrow0$, keeping $\eps>0$ fixed,
and deduce existence of solutions for the regularized problem \eqref{prob_eps_bis}.

The estimates \eqref{est1}--\eqref{est6} imply that there exist
$(u_\eps,\hat u_\eps, v_\eps,\xi_\eps,G_\eps)$
such that, as $\lambda\searrow0$,
\begin{align*}
  u_{\eps\lambda}\wstarto u_\eps \qquad&\text{in } L^q(\Omega; L^\infty(0,T; V_0^*))\,,\\
  J_\lambda(u_{\eps\lambda})\wto \hat u_\eps \qquad&\text{in } L^p(\Omega; L^p(0,T; V))\,,\\
  v_\eps\wstarto v_\eps \qquad&\text{in } L^q(\Omega; L^\infty(0,T; V_0^*))
  \cap L^2(\Omega; L^2(0,T; H))\,,\\
  A_\lambda(u_{\eps\lambda})\wto\xi_\eps \qquad&\text{in } L^q(\Omega; L^q(0,T; V^*))\,,\\  
  G_{\eps\lambda}\wto G_\eps \qquad&\text{in } 
   L^2(\Omega; L^2(0,T; \cL^2(U,H)))\,.
\end{align*}
Note that by the definition of Yosida approximation and estimate \eqref{est1} we have 
\[
  \norm{u_{\eps\lambda}-J_\lambda(u_{\eps\lambda})}_{L^2(\Omega; L^2(0,T; H))}
  =\lambda\norm{A_\lambda(u_{\eps\lambda})}_{L^2(\Omega; L^2(0,T; H))}\leq M\lambda^{1/2}\to 0\,,
\]
which implies  that $\hat u_\eps=u_\eps$. Moreover, by 
letting $\lambda\searrow0$ in the forward equation in \eqref{app}, we get 
\[
  u_\eps=u_{0,\eps} +\int_0^\cdot v_\eps(s)\,\d s + \int_0^\cdot (B+\eps G_\eps)(s)\,\d W(s)\,,
\]
yielding, a posteriori, also that
$u_\eps\in L^2(\Omega; C^0([0,T]; H))$. Similarly, letting $\lambda\searrow0$
in the backward equation in \eqref{app} we obtain, by the weak convergences above, 
\[
  \eps v_\eps + \int_\cdot^Tv_\eps(s)\,\d s + \int_\cdot^T\xi_\eps(s)\,\d s=\eps\int_\cdot^TG_\eps(s)\,\d W(s)\,,
\]
which yields a posteriori that $v_\eps\in L^q(\Omega; C^0([0,T]; V_0^*))$.
Furthermore, by comparison in the equation \eqref{app} it follows in particular that 
\[
  u_{\eps\lambda}(T)\wto u_\eps(T) \quad\text{in } L^2(\Omega,\cF_T; H)\,, \qquad
  v_{\eps\lambda}(0)\wto v_\eps(0) \quad\text{in } L^q(\Omega,
  \cF_0; V_0^*)\,.
\]

It only remains to show that $\xi_\eps\in A(\cdot, u_\eps)$ almost everywhere.
To this end, we recall that by comparison of \eqref{ito_aux} and \eqref{ito_aux2} we have 
\begin{align*}
  &\frac12\E\norm{u_{\eps\lambda}(T)}_H^2 +
  \eps\E\int_0^T\norm{v_{\eps\lambda}(s)}_H^2\,\d s\\
  &\qquad+
  \E\int_0^T(A_\lambda(u_{\eps\lambda}(s)), u_{\eps\lambda}(s))\,\d s
  +\frac{\eps^2}2\E\int_0^T\norm{G_{\eps\lambda}(s)}^2_{\cL^2(U,H)}\,\d s\\
  &\leq \frac12\E\norm{u_{0,\eps}}_H^2 
  - \eps\E(v_{\eps\lambda}(0), u_{0,\eps})
  + \frac12\E\int_0^T\norm{B(s)}^2_{\cL^2(U;H)}\,\d s\,.
\end{align*}
By the weak lower semicontinuity of the norms and
the regularities of the data $B_\eps$ and $u_{0,\eps}$
in condition \eqref{data:eps} we infer then that

\begin{align}
  \nonumber
  &\limsup_{\lambda\searrow0}
  \E\int_0^T(A_\lambda(u_{\eps\lambda}(s)), u_{\eps\lambda}(s))\,\d s\\
  \nonumber
  &=\frac12\E\norm{u_{0,\eps}}_H^2 
  - \eps\E(v_{\eps}(0), u_{0,\eps})
  + \frac12\E\int_0^T\norm{B(s)}^2_{\cL^2(U;H)}\,\d s\\
  \nonumber
  &\qquad
  -\frac12\liminf_{\lambda\searrow0}\E\norm{u_{\eps\lambda}(T)}_H^2
  -\eps\liminf_{\lambda\searrow0}\E\int_0^T\norm{v_{\eps\lambda}(s)}_H^2\,\d s
  -\frac{\eps^2}2
  \liminf_{\lambda\searrow0}\E\int_0^T\norm{G_{\eps\lambda}(s)}_{\cL^2(U,H)}^2\,\d s\\
  \nonumber
  &\leq\frac12\E\norm{u_{0,\eps}}_H^2 
  - \eps\E(v_{\eps}(0), u_{0,\eps})
  + \frac12\E\int_0^T\norm{B(s)}^2_{\cL^2(U;H)}\,\d s\\
  \label{limsup_lam}
  &\qquad
  -\frac12\E\norm{u_{\eps}(T)}_H^2
  -\eps\E\int_0^T\norm{v_{\eps}(s)}_H^2\,\d s
  -\frac{\eps^2}2\E\int_0^T\norm{G_{\eps}(s)}_{\cL^2(U,H)}^2\,\d s\,.
\end{align}

We claim now that the right-hand side of inequality \eqref{limsup_lam} coincides with 
\[
  \E\int_0^T\ip{\xi_\eps(s)}{u_\eps(s)}\,\d s\,.
\]
In order to show this, we replicate in the limit $\lambda=0$
the It\^o's formulas obtained for $\lambda>0$
in \eqref{ito_aux} and \eqref{ito_aux2}. 

Indeed, the It\^o formula for the square of the $H$-norm of $u_\eps$ yields 
\[
  \frac12\E\norm{u_{\eps}(T)}_H^2 = 
  \frac12\E\norm{u_{0,\eps}}_H^2 + \E\int_0^T\left(v_{\eps}(s), u_{\eps}(s)\right)\,\d s
  +\frac12\E\int_0^T\norm{(B+\eps G_\eps)(s)}^2_{\cL^2(U,H)}\,\d s\,,
\]
while It\^o's formula for $(u_\eps, \eps v_\eps)$ yields
\begin{align*}
  &\eps\E\int_0^T\norm{v_\eps(s)}_H^2\,\d s + 
  \E\int_0^T(v_{\eps}(s), u_{\eps}(s))\,\d s
  +\E\int_0^T\ip{\xi_\eps(s)}{u_{\eps}(s)}\,\d s\\
  & = \eps^2\E\int_0^T\norm{G_{\eps}(s)}^2_{\cL^2(U,H)}\,\d s +
  \eps\E\int_0^T\left(B(s), G_{\eps}(s)\right)_{\cL^2(U;H)}\d s
  -\eps\E\ip{v_{\eps}(0)}{u_{0,\eps}} \,.
\end{align*}
By comparison we infer exactly that 
\begin{align}
  \nonumber
  &\frac12\E\norm{u_{\eps}(T)}_H^2 +
  \eps\E\int_0^T\norm{v_\eps(s)}_H^2\,\d s
  +\E\int_0^T\ip{\xi_\eps(s)}{u_{\eps}(s)}\,\d s
  +\frac{\eps^2}2\E\int_0^T\norm{G_{\eps}(s)}^2_{\cL^2(U,H)}\,\d s\\
  &= \frac12\E\norm{u_{0,\eps}}_H^2 
  - \eps\E\ip{v_{\eps}(0)}{u_{0,\eps}}
  \label{ito_eps}
 + \frac12\E\int_0^T\norm{B(s)}^2_{\cL^2(U;H)}\,\d s\,,
\end{align}

as required. Substituting now this expression in the inequality \eqref{limsup_lam}, we get
\[
  \limsup_{\lambda\searrow0}\E\int_0^T(A_\lambda(u_{\eps\lambda}(s)), u_{\eps\lambda(s)})\,\d s
  \leq \E\int_0^T\ip{\xi_\eps(s)}{u_{\eps}(s)}\,\d s\,.
\]
The maximal monotonicity of $A$ implies then that $\xi_\eps\in
A(\cdot, u_\eps)$ almost everywhere, see
\cite[Prop.~2.5, p.~27]{Brezis73}.
Hence, $(u_\eps, \xi_\eps, v_\eps, G_\eps)$ is a solution to \eqref{prob_eps_bis}
in the sense of Theorem~\ref{thm:00}.ii.

\subsection{Uniqueness}
Let us check that the quadruplet $(u_\eps, \xi_\eps, v_\eps, G_\eps)$ is unique.
Assume that $(u_\eps^i, \xi_\eps^i, v_\eps^i, G_\eps^i)$,
for $i=1,2$, solve \eqref{prob_eps_bis} in the sense of
Theorem~\ref{thm:00}.ii.
Then, we have 
\[
  \begin{cases}
  \d (u_\eps^1-u_\eps^2) = 
  (v_\eps^1-v_\eps^2)\,\d t + \eps(G_\eps^1-G_\eps^2)\,\d W  \\
  (u_\eps^1-u_\eps^2)(0)=0\,,
  \end{cases}
\]
and
\[
  \begin{cases}
  -\eps\d(v_\eps^1-v_\eps^2) + (v_\eps^1-v_\eps^2)\,\d t + (\xi_\eps^1-\xi_\eps^2)\,\d t
  =-\eps(G_\eps^1-G_\eps^2)\,\d W\\
  (v_\eps^1-v_\eps^2)(T)=0\,.
  \end{cases}
\]
Using the same argument employed to deduce \eqref{ito_eps},
we infer that 
\begin{align*}
  &\frac12\E\norm{(u_{\eps}^1-u_\eps^2)(T)}_H^2 +
  \eps\E\int_0^T\norm{(v_\eps^1-v_\eps^2)(s)}_H^2\,\d s\\
  &\qquad+\E\int_0^T\ip{(\xi^1_\eps-\xi_\eps^2)(s)}{u_{\eps}(s)}\,\d s
  +\frac{\eps^2}{2}\E\int_0^T\norm{(G_\eps^1-G_\eps^2)(s)}_{\cL^2(U,H)}^2\,\d s
  = 0\,,
\end{align*}
which implies that $v^1_\eps-v_\eps^2=0$ and $G_\eps^1-G_\eps^2=0$ 
by the monotonicity of $A$.
From the forward equation we deduce that $u_\eps^1-u_\eps^2=0$.
By comparison in the the backward equation we find that
$\xi_\eps^1-\xi_\eps^2=0$, as required.
This completes the proof of well-posedness in Theorem~\ref{thm:00}.ii.


\section{The asymptotics as $\eps\searrow0$ of the forward-backward problem}
\label{sec:thm2}
The aim of this section is to show that the solution of the
$\eps$-regularized forward-backward problem \eqref{prob_eps_bis} converges
to the the solution of the nonregularized problem \eqref{eq:0}.

First of all, note that the estimates \eqref{est1}--\eqref{est6} are
independent of $\eps$. Hence, 
by weak lower semicontinuity we deduce that 
\begin{align*}
  \norm{u_\eps}^q_{L^q(\Omega; C^0([0,T]; V_0^*))} + 
  \norm{u_\eps}^p_{L^p(\Omega; L^p(0,T; V))}&\leq M\,,\\
  \eps\norm{v_\eps}^q_{L^q(\Omega; C^0([0,T]; V_0^*))} + 
  \eps\norm{v_\eps}^2_{L^2(\Omega; L^2(0,T; H))}
  +\norm{v_\eps}^q_{L^q(\Omega; L^q(0,T; V_0^*))}&\leq M\,,\\
  \norm{\xi_\eps}^q_{L^q(\Omega; L^q(0,T; V^*))} &\leq M\,,\\
  \eps^2\norm{G_\eps}^2_{L^2(\Omega; L^2(0,T; \cL^2(U,H)))} + 
  \eps\norm{G_\eps}^q_{L^q(\Omega; L^2(0,T; \cL^2(U,V_0^*)))} &\leq M\,.
\end{align*}
Moreover, thanks also to assumption \eqref{ip_init1}
and \cite[Lem.~2.1]{fland-gat}, we have that 
\[
 \norm{u_\eps}_{L^q_\cP(\Omega; W^{s,q}(0,T; V_0^*))}\leq M_s \qquad\forall\,s\in(0,1/2)\,.
\]
We deduce that there exist 
\begin{align*}
  &u\in L^q(\Omega; W^{s,q}(0,T; V_0^*))\cap
  L^q_\cP(\Omega; L^\infty(0,T;V_0^*))\cap L^p_\cP(\Omega; L^p(0,T; V))\,,\\
  &v\in L^q_\cP(\Omega; L^q(0,T; V_0^*))\,, \qquad\xi\in L^q_\cP(\Omega; L^q(0,T; V^*))
\end{align*}
such that, as $\eps\searrow0$,
\begin{align*}
  u_{\eps}\wstarto u \quad&\text{in } L^q(\Omega; W^{s,q}(0,T; V_0^*))\cap
  L^q(\Omega; L^\infty(0,T; H))\cap L^p(\Omega; L^p(0,T; V))\,,\\
  v_{\eps}\wto v \quad&\text{in } L^q(\Omega; L^q(0,T; V_0^*))\,,\\
  \xi_{\eps}\wto \xi \quad&\text{in } L^q(\Omega; L^q(0,T; V^*))\,.
\end{align*}
Moreover, note that 
\begin{align*}
&\norm{\eps v_\eps}_{L^q(\Omega; C^0([0,T]; V_0^*))}=
\eps^{1/p}\eps^{1/q}\norm{v_\eps}_{L^q(\Omega; C^0([0,T]; V_0^*))}
\leq\eps^{1/p}M^{1/q}\to 0\,,\\
&\norm{\eps G_\eps}_{L^q(\Omega; L^2(0,T; \cL^2(U,V_0^*)))}= 
\eps^{1/p}\eps^{1/q}\norm{G_\eps}_{L^q(\Omega; L^2(0,T; \cL^2(U,V_0^*)))}
\leq\eps^{1/p}M^{1/q}\to 0\,,
\end{align*}
so that, by the Burkholder-Davis-Gaundy inequality,
\begin{align*}
  \eps v_\eps\to 0 \quad&\text{in } L^q(\Omega; C^0([0,T]; V_0^*))\,,\\
  \eps G_\eps\to 0 \quad&\text{in } L^q(\Omega; L^2(0,T; \cL^2(U,V_0^*)))\,,\\
  \eps G_\eps\cdot W\to 0 \quad&\text{in } L^q(\Omega; C^0([0,T]; V_0^*))\,.
\end{align*}

Then, by passing to the weak limit as $\eps\searrow0$
in the backward equation in \eqref{prob_eps_bis} yields
\[
  \int_t^Tv(s)\,\d s + \int_t^T \xi(s)\,\d s= 0 \qquad\forall\,t\in[0,T]\,,\quad\P\text{-a.s.}\,,
\]
from which $v+\xi = 0$ almost everywhere in $\Omega\times(0,T)$.
In particular, we have that
\[
  v=-\xi \in L^q_{\cP}(\Omega; L^q(0,T; V^*))\,.
\]
Furthermore,
recalling the convergences \eqref{ip_init1} on the data
and passing to the weak limit in the forward equation in \eqref{prob_eps_bis} we have that
\[
  u = u_0 + \int_0^\cdot v(s)\,\d s + \int_0^\cdot B(s)\,\d W(s)\,.
\]
In particular, since $v\in L^q_\cP(\Omega; L^q(0,T; V^*))$ and 
$u\in L^p_\cP(\Omega; L^p(0,T; V))$, by the classical It\^o's formula
(see \cite{LiuRo})  we deduce
by comparison that $u\in L^2(\Omega; C^0([0,T]; H))$,
while from \eqref{ito_eps} we have
\[
  u_\eps(T)\wto u(T) \quad\text{in } L^2(\Omega; H)\,.
\]
 
Eventually, let us show that $\xi \in A(\cdot, u)$ almost
everywhere. This follows again by lower-semicontinuity
arguments. In particular, from \eqref{ito_eps}, the 
weak lower semicontinuity of the norms, and the convergences \eqref{ip_init1} we have that 
\begin{align*}
  \nonumber
  &\limsup_{\eps\searrow0}
  \E\int_0^T\ip{\xi_\eps(s)}{u_{\eps}(s)}\,\d s\\
  &\leq-\frac12\liminf_{\eps\searrow0}
  \E\norm{u_{\eps}(T)}_H^2
  +\frac12\E\norm{u_0}_H^2 
  + \frac12\E\int_0^T\norm{B(s)}^2_{\cL^2(U;H)}\,\d s
  -\limsup_{\eps\to0} 
  \eps\E(v_{\eps}(0), u_{0,\eps})\\
  &\leq -\frac12\E\norm{u(T)}_H^2
  +\frac12\E\norm{u_0}_H^2 
  + \frac12\E\int_0^T\norm{B(s)}^2_{\cL^2(U;H)}\,\d s\\
  &\qquad+\limsup_{\eps\searrow0}
  \eps\norm{v_\eps(0)}_{L^q(\Omega;V_0^*)}\norm{u_{0,\eps}}_{L^p(\Omega;V_0)}\,.
\end{align*}
The last term on the right-hand side can be handled using 
the estimates above and the condition \eqref{ip_init1} as
\[
  \eps\norm{v_\eps(0)}_{L^q(\Omega;V_0^*)}\norm{u_{0,\eps}}_{L^p(\Omega;V_0)}
  \leq M^{1/q}\eps^{1/p}\norm{u_{0,\eps}}_{L^p(\Omega;V_0)} \to 0\,.
\]
Hence, we infer that 
\[
  \limsup_{\eps\searrow0}
  \E\int_0^T\ip{\xi_\eps(s)}{u_{\eps}(s)}\,\d s\leq
  -\frac12\E\norm{u(T)}_H^2
  +\frac12\E\norm{u_0}_H^2 
  + \frac12\E\int_0^T\norm{B(s)}^2_{\cL^2(U;H)}\,\d s\,.
\]
Now, since by the It\^o formula for $u$ and the fact that $\xi=-v$ we know that 
\[
  \frac12\E\norm{u(T)}_H^2 + \E\int_0^T\ip{\xi(s)}{u(s)}\,\d s=
  \frac12\E\norm{u_0}_H^2 
  + \frac12\E\int_0^T\norm{B(s)}^2_{\cL^2(U;H)}\,\d s\,,
\]
we obtain
\[
  \limsup_{\eps\searrow0}
  \E\int_0^T\ip{\xi_\eps(s)}{u_{\eps}(s)}\,\d s\leq
  \E\int_0^T\ip{\xi(s)}{u(s)}\,\d s\,.
\]
This yields $\xi\in A(\cdot, u)$ almost everywhere in $\Omega\times(0,T)$, 
and the first part of Theorem~\ref{thm:00}.iii is proved.

We only need to show the strong convergences in the 
last assertion of Theorem~\ref{thm:00}.iii,
under the extra assumption that $V\embed H$ is compact and $p<4$.
The idea is to use 
the following classical result by {\sc Gy\"ongy \& Krylov} \cite[Lem.~1.1]{gyo-kry}.
\begin{lem}\label{gy}
  Let $\mathcal X$ be a Polish space and $(Z_n)_n$ be a sequence
  of $\mathcal X$-valued random variables. Then $(Z_n)_n$ converges
  in probability if and only if
  for any pair of subsequences $(Z_{n_k})_k$ and $(Z_{n_j})_j$, there exists 
  a joint sub-subsequence $(Z_{n_{k_i}}, Z_{n_{j_i}})_i$ converging 
  in law to a probability measure $\nu$ on $\mathcal X\times\mathcal X$ such
  that $\nu(\{(z_1,z_2)\in\mathcal X\times\mathcal X: z_1=z_2\})=1$.
\end{lem}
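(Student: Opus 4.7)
The plan is to prove both implications separately, with the forward one being essentially immediate and the backward one requiring a contrapositive argument based on the Portmanteau theorem and completeness of the space of random variables.

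First I would treat the forward direction. Assume $Z_n \to Z$ in probability for some $\mathcal X$-valued random variable $Z$. Then any two subsequences $(Z_{n_k})_k$ and $(Z_{n_j})_j$ also converge to $Z$ in probability, and joint convergence in probability on the product space $\mathcal X\times\mathcal X$, which is again Polish when equipped with a product metric, yields $(Z_{n_k},Z_{n_j}) \to (Z,Z)$ in probability, hence in law to the distribution of $(Z,Z)$. Since this limit law is carried by the diagonal $D:=\{(z_1,z_2):z_1=z_2\}$, the choice $k_i=i$, $j_i=i$ meets the requirement.

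For the backward direction I would argue by contrapositive, exploiting completeness. Fix a complete metric $d$ on $\mathcal X$ inducing its Polish topology and equip $L^0(\Omega;\mathcal X)$ with the Ky Fan distance
$$
d_0(X,Y):=\E\bigl[d(X,Y)\wedge 1\bigr],
$$
which metrizes convergence in probability and is complete. If $(Z_n)$ failed to converge in probability, it would fail to be Cauchy for $d_0$, producing $\delta,\eps>0$ and indices $m_i,n_i\to\infty$ with
$$
\P\bigl(d(Z_{m_i},Z_{n_i})\geq \delta\bigr)\geq \eps\qquad\forall\,i.
$$
Applying the hypothesis to the two subsequences $(Z_{m_i})_i$ and $(Z_{n_i})_i$, there would exist a joint sub-subsequence $(Z_{m_{i_\ell}},Z_{n_{i_\ell}})_\ell$ converging in law to some probability measure $\nu$ with $\nu(D)=1$. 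However the set $F:=\{(z_1,z_2):d(z_1,z_2)\geq \delta\}$ is closed in $\mathcal X\times\mathcal X$, and Portmanteau yields
$$
\nu(F)\geq \limsup_{\ell\to\infty}\P\bigl(d(Z_{m_{i_\ell}},Z_{n_{i_\ell}})\geq \delta\bigr)\geq \eps>0,
$$
contradicting $F\cap D=\emptyset$ and $\nu(D)=1$.

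The contradiction forces $(Z_n)$ to be Cauchy in probability, and completeness of $(L^0(\Omega;\mathcal X),d_0)$ then produces the desired limit $Z$ with $Z_n\to Z$ in probability. The only delicate point, more a bookkeeping verification than a genuine obstacle, is selecting the product metric on $\mathcal X\times\mathcal X$ so that $F$ is closed and Portmanteau applies cleanly, together with the identification of the Ky Fan distance as a complete metrization of convergence in probability; both facts are classical for Polish targets.
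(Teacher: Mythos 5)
Your proof is correct. Note that the paper itself does not prove this lemma at all: it is imported verbatim from Gy\"ongy \& Krylov \cite[Lem.~1.1]{gyo-kry}, so there is no internal argument to compare against. Your two-step structure (forward direction via joint convergence in probability of the pair to $(Z,Z)$; backward direction by contrapositive, extracting a non-Cauchy pair of subsequences for the Ky Fan metric and contradicting $\nu(D)=1$ through the Portmanteau inequality applied to the closed set $\{d(z_1,z_2)\geq\delta\}$) is essentially the standard proof of the cited result, and all the ingredients you invoke --- completeness of $L^0(\Omega;\mathcal X)$ under the Ky Fan metric for a Polish target, closedness of the off-diagonal set, the elementary bound $\E[d\wedge 1]\leq \delta+\P(d\geq\delta)$ --- are sound. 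The only point you leave implicit is that the indices $m_i$, $n_i$ witnessing the failure of the Cauchy property must be arranged to be strictly increasing so that $(Z_{m_i})_i$ and $(Z_{n_i})_i$ are genuine subsequences; this is routine and does not constitute a gap.
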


Let then $(u_{\eps_k})_k$ and $(u_{\eps_j})_j$ be arbitrary subsequences of $(u_\eps)$.
By the compactness result \cite[Cor.~5, p.~86]{simon} we have the compact inclusion
\[
  L^p(0,T; V)\cap W^{s,q}(0,T; V_0^*)\cembed L^p(0,T; H)
\]
provided that $s>1/q-1/p=1-2/p$. Since $p<4$ by assumption, 
an easy computation shows that $1-2/p<1/2$: 
hence there exists $\bar s\in(0,1/2)$ such that the compact inclusion holds.
Now, from the estimates we know that 
\[
  \norm{u_\eps}_{L^p(\Omega; L^p(0,T; V)\cap W^{\bar s,q}(0,T; V_0^*))}\leq M\,,
\]
which implies, using a standard argument based on the Markov inequality, that the family
of laws of $(u_{\eps})_\eps$ on $L^p(0,T; H)$ is tight.
By the Skorokhod theorem \cite[Thm.~2.7]{ike-wata},
there exists a probability space $(\Omega',\cF',\P')$
and measurable functions $\phi_\eps:(\Omega',\cF')\to(\Omega,\cF)$
such that $\P'\circ\phi_\eps^{-1}=\P$ for all $\eps>0$ and 
\begin{align*}
  u'_{\eps_{k_i}}:=u_{\eps_{k_i}}\circ\phi_{\eps_{k_i}}
  \to u'_1 \quad\text{in } L^p(0,T; H)\,,\quad\P'\text{-a.s.}\,,\\
  u'_{\eps_{j_i}}:=u_{\eps_{j_i}}\circ\phi_{\eps_{j_i}}
  \to u'_2 \quad\text{in } L^p(0,T; H)\,,\quad\P'\text{-a.s.}\,.
\end{align*}
Relying on the uniform estimates proved above and on the uniqueness of the 
limit problem, it is not difficult to show that 
\[
  \P'\{u_1'(t)=u_2'(t)\;\forall\,t\in[0,T]\}=1\,,
\]
which is exactly the condition of Lemma~\ref{gy}. For further details 
we refer for example to \cite[\S~5]{vall-zimm} and \cite{ScSt-DNL}. 
Hence, the Lemma implies that, on the original probability space, we have 
\[
  u_\eps\to u\quad\text{in } L^p(0,T; H)\,,\quad\P\text{-a.s.}
\]
As $(u_\eps)_\eps$ is bounded in $L^p(\Omega; L^p(0,T; H))$, this yields
\[
  u_\eps\to u \quad\text{in } L^r(\Omega; L^p(0,T; H)) \qquad\forall\,r\in[1,p)\,.
\]
This completes the proof of Theorem~\ref{thm:00}.iii.


\section{Equivalence between regularized equation and minimization problem}
\label{sec:equiv}

This section is devoted to check that $I_\eps$ admits a unique minimizer
in ${\mathcal V}$, and that this coincides with the unique solution to the
$\eps$-regularized problem. This proves
Theorem \ref{thm:00}.i.
In all of this section
$\eps>0$ is kept fixed.

A natural idea would be to identify the subdifferential 
of $I_\eps$ in terms of $\partial (I_\eps^1+S_\eps)$ and $\partial I^2_\eps$.
However, let us point out that the domain of $I_\eps^2$,
i.e.~the space $L^p_\cP(\Omega; L^p(0,T; V))$, may have 
empty interior in the topology of $\mathcal I^{2,2}(H,H)$.
For this reason, the analogous of \cite[Thm.~2.10]{barbu-monot}
is not applicable in this case, and we need to rely again 
on a further approximation, 
obtained by replacing $\Phi$ with its 
Moreau-Yosida approximation $\Phi_\lambda$,
for $\lambda>0$. 

We follow the following strategy instead.
First of all, we show that the unique solution $u_\eps$ to problem \eqref{prob_eps}
is a minimizer for $I_\eps$. This ensures in particular that 
$I_\eps$ admits at least a minimizer. Secondly, 
we note that actually $I_\eps$ admits at most one minimizer.
This eventually entails that minimizing $I_\eps$ is equivalent 
to solving \eqref{prob_eps}.

\begin{prop}
  \label{sol_min}
  The unique solution $u_\eps$ to \eqref{prob_eps} is a minimiser for $I_\eps$.
\end{prop}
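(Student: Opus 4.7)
The plan is to exploit the $\lambda$-approximation already constructed in Section~\ref{sec:thm1}. Recall that by Proposition~\ref{equiv_lam} the unique minimiser $z_{\eps\lambda}$ of $I_{\eps\lambda}$ over $\mathcal I^{2,2}(H,H)$ is exactly the solution component $u_{\eps\lambda}$ of the approximated problem \eqref{app}, and from the passage to the limit we already know that $u_{\eps\lambda}\to u_\eps$ in the weak senses displayed in Section~\ref{sec:thm1}; in particular, $\partial_t u^d_{\eps\lambda}=v_{\eps\lambda}\rightharpoonup v_\eps=\partial_t u^d_\eps$ in $L^2_\cP(\Omega;L^2(0,T;H))$, $u^s_{\eps\lambda}-B=\eps G_{\eps\lambda}\rightharpoonup\eps G_\eps=u^s_\eps-B$ in $L^2_\cP(\Omega;L^2(0,T;\cL^2(U,H)))$, and $J_\lambda(u_{\eps\lambda})\rightharpoonup u_\eps$ in $L^p_\cP(\Omega;L^p(0,T;V))$.

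The first step is an elementary upper bound: fix an arbitrary competitor $z\in\mathcal V$ with $z^d(0)=u_{0,\eps}$ (otherwise $I_\eps(z)=+\infty$ and there is nothing to prove). Since $\Phi_\lambda\leq\Phi$ pointwise by definition of the Moreau--Yosida regularisation, and since $z\in L^p_\cP(\Omega;L^p(0,T;V))$, one has $I^2_{\eps\lambda}(z)\leq I^2_\eps(z)$, so minimality of $z_{\eps\lambda}=u_{\eps\lambda}$ gives
\[
I_{\eps\lambda}(u_{\eps\lambda})\leq I_{\eps\lambda}(z)=I^1_\eps(z)+S_\eps(z)+I^2_{\eps\lambda}(z)\leq I_\eps(z).
\]

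The crux is the matching lower bound $I_\eps(u_\eps)\leq\liminf_{\lambda\to0}I_{\eps\lambda}(u_{\eps\lambda})$. The kinetic piece $I^1_\eps$ is convex and continuous on $\mathcal I^{2,2}(H,H)$, hence weakly lower semicontinuous, so the weak convergences of $v_{\eps\lambda}$ and of $u^s_{\eps\lambda}-B$ give $I^1_\eps(u_\eps)\leq\liminf I^1_\eps(u_{\eps\lambda})$; the constraint term $S_\eps$ is handled directly since $u^d_{\eps\lambda}(0)=u_{0,\eps}$ passes to $u^d_\eps(0)=u_{0,\eps}$. For the nonlinearity, split
\[
I^2_{\eps\lambda}(u_{\eps\lambda})=\E\int_0^T e^{-s/\eps}\Phi(s,J_\lambda(u_{\eps\lambda}(s)))\,\d s+\E\int_0^T e^{-s/\eps}\frac{1}{2\lambda}\|u_{\eps\lambda}(s)-J_\lambda(u_{\eps\lambda}(s))\|_H^2\,\d s.
\]
The second summand is nonnegative, and the first summand is a convex lower semicontinuous functional of its argument on $L^p_\cP(\Omega;L^p(0,T;V))$ (by {\bf H1} and Fatou), hence weakly lower semicontinuous; since $J_\lambda(u_{\eps\lambda})\rightharpoonup u_\eps$ in that space, one obtains $\liminf I^2_{\eps\lambda}(u_{\eps\lambda})\geq I^2_\eps(u_\eps)$.

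Combining the two bounds yields $I_\eps(u_\eps)\leq I_\eps(z)$ for every $z\in\mathcal V$, so $u_\eps$ is a global minimiser of $I_\eps$ on $\mathcal V$. The main technical point to watch is the liminf inequality for the nonlinear term: one must really pass through $J_\lambda(u_{\eps\lambda})$ (for which weak convergence in the $V$-topology is available) rather than through $u_{\eps\lambda}$ itself, because $u_{\eps\lambda}$ is only controlled in $H$-valued spaces and the functional $\Phi$ is not lower semicontinuous with respect to the weaker $H$-topology. The nonnegative Yosida penalty $\tfrac{1}{2\lambda}\|u_{\eps\lambda}-J_\lambda(u_{\eps\lambda})\|_H^2$ is harmless for the lower bound and is simply discarded.
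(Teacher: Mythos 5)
Your proposal is correct and follows essentially the same route as the paper: minimality of $u_{\eps\lambda}$ for $I_{\eps\lambda}$ plus $\Phi_\lambda\leq\Phi$ for the upper bound, and then weak lower semicontinuity of the kinetic part together with the bound $\Phi(\cdot,J_\lambda(z))\leq\Phi_\lambda(\cdot,z)$ and the weak convergence $J_\lambda(u_{\eps\lambda})\rightharpoonup u_\eps$ in $L^p_\cP(\Omega;L^p(0,T;V))$ for the liminf inequality on the nonlinear part. Your remark that one must pass through $J_\lambda(u_{\eps\lambda})$ rather than $u_{\eps\lambda}$ itself is exactly the point the paper's argument exploits, and your explicit treatment of the stochastic component via $u^s_{\eps\lambda}-B=\eps G_{\eps\lambda}\rightharpoonup\eps G_\eps$ is if anything slightly more careful than the paper's phrasing.
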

\begin{proof}
  From Section~\ref{sec:thm1} we know that $u_\eps$ can be constructed as 
  limit in suitable topologies of a sequence $(u_{\eps\lambda})_{\lambda>0}$, 
  where $u_{\eps\lambda}$ is the unique first solution component of \eqref{app}.
  By Proposition~\ref{equiv_lam} we also know that such $u_{\eps\lambda}$
  is the unique global minimizer of $I_{\eps\lambda}$ for all $\lambda>0$, so that 
  \beq\label{min_lam}
  I_{\eps\lambda}(u_{\eps\lambda})\leq I_{\eps\lambda}(z) \qquad\forall\,z\in\mathcal I^{2,2}(H,H)\,.
  \eeq
  Let us now consider $z\in D(I_\eps)=\mathcal V$:  since $\Phi_\lambda\leq\Phi$,
  we immediately have
  \[
  I_{\eps\lambda}(z)\leq I_\eps(z) \qquad\forall\,z\in\mathcal V\,.
  \]
  Furthermore, by Section~\ref{sec:thm1} we know that 
  \begin{align*}
  J_\lambda(u_{\eps\lambda})\wto u_\eps \quad&\text{in } L^p(\Omega; L^p(0,T; V))\,,\\
  v_{\eps\lambda}\wto v_\eps\quad&\text{in } L^2(\Omega; L^2(0,T; H))\,.
  \end{align*}
  Hence, by the definition of $\Phi_\lambda$, the weak lower semicontinuity of $\Phi$,
  and by the Fatou lemma, we have
  \begin{align*}
  \E\int_0^Te^{-t/\eps}\Phi(t,u_\eps(t))\,\d t&\leq 
  \liminf_{\lambda\searrow0}\E\int_0^Te^{-t/\eps}\Phi(t,J_\lambda(u_{\eps\lambda}(t)))\,\d t\\
  &\leq \liminf_{\lambda\searrow0}\E\int_0^Te^{-t/\eps}\Phi_\lambda(t,u_{\eps\lambda}(t))\,\d t
  \end{align*}
  and
  \[
  \E\int_0^Te^{-t/\eps}\frac\eps2\norm{v_\eps(t)}^2\,\d t\leq 
  \liminf_{\lambda\searrow0}
  \E\int_0^Te^{-t/\eps}\frac\eps2\norm{v_{\eps\lambda}(t)}^2\,\d t\,.
  \]
  Taking these remarks into account, and 
  recalling that $v_{\eps\lambda}=\partial_t u_{\eps\lambda}^d$,
  $v_\eps=\partial_t u_\eps^d$, and $u_{\eps\lambda}^s=u_{\eps}^s=B_\eps$, we have 
  \begin{align*}
  I_\eps(u_\eps)&=\E\int_0^Te^{-t/\eps}\left[\frac\eps2\norm{v_\eps(t)}^2
  +\Phi(t,u_\eps(t))\right]\,\d t\\
  &\leq\liminf_{\lambda\searrow0}
  \E\int_0^Te^{-t/\eps}\left[\frac\eps2\norm{v_{\eps\lambda}(t)}^2
  +\Phi_\lambda(t,u_{\eps\lambda}(t))\right]\,\d t=
  \liminf_{\lambda\searrow0}I_{\eps\lambda}(u_{\eps\lambda})\,.
  \end{align*}
  Passing then to the $\liminf$ in \eqref{min_lam} yields then
  \[
  I_\eps(u_\eps)\leq I_\eps(z) \qquad\forall\,z\in\mathcal V\,,
  \]
  hence $u_\eps$ is a global mininimizer of $I_\eps$, as required.
\end{proof}

In order to conclude the proof of Theorem~\ref{thm:00}.i, 
note that the functional $I_\eps^1+S_\eps$ is 
strictly convex and coercive on $\mathcal I^{2,2}(H,H)$, 
hence so is $I_{\eps}$ on $\mathcal V$ since $\Phi$ is convex and
bounded from below.
Since $\mathcal V$ is reflexive, 
we deduce that $I_{\eps}$ admits a unique global minimizer $z_\eps\in\mathcal V$.
Moreover, by virtue of Proposition~\ref{sol_min}, we know that 
the unique solution $u_\eps\in\mathcal U_{reg}$ to \eqref{prob_eps}
is a global minimizer of $I_\eps$. By uniqueness of $z_\eps$, 
we infer that $z_\eps=u_\eps\in\mathcal U_{reg}$.
This concludes the proof of Theorem~\ref{thm:00}.i.


\section*{Acknowledgement}
LS is supported by the Austrian Science Fund (FWF) project M\,2876. 
US
is supported by the FWF projects F\,65, W\,1245, I\,4354, and
P\,32788 and by the Vienna
Science and Technology Fund (WWTF) project MA14-009.



\def\cprime{$'$}

\end{document}